\newcommand \Floor[1]
	\left	\lfloor {#1}\right \rfloor 
		\renewcommand{\theenumi}{\rm (\roman{enumi})}
\newtheorem{clm}{Claim}
\newcommand \Clm[2]
 \def \setf {{\cal F}}
 \def \mid {{Mid}}
\def \iff {if and only if }
\newcommand {\setm}{{\cal M}}
\newtheorem{thm}{Theorem}[section]
\newtheorem{cor}[thm]{Corollary}
\newtheorem{Lem}{Lemma}[section]
\newtheorem{obs}{Observation}[section]
\newtheorem{pro}{Proposition}[section]
\newtheorem{prob}{Problem}[section]
\numberwithin{equation}{section}
\makeatletter \@addtoreset{equation}{section} \makeatother
\def \proofend {\hfill $\Box$}
\begin{document}

\title{
		Partial Domination of Middle Graphs
	}
	
	\author[1,2]{\small Shumin Zhang\footnote{Email: zhangshumin@qhnu.edu.cn.}}
	\author[1]{\small Minhui Li\footnote{Email: liminhuimath@163.com.}}
	\author[3\footnote{Corresponding author. Email: fengming.dong@nie.edu.sg, donggraph@163.com.}]{\small Fengming Dong}
	
	\affil[1]{\footnotesize 
		School of Mathematics and Statistics,
		Qinghai Normal University, Xining, 810001, China}
	
	\affil[2]{\footnotesize Academy of Plateau Science and Sustainability, People's Government of Qinghai Province and Beijing Normal University}
    \affil[3]{\footnotesize National Institute of Education, Nanyang Technological University, Singapore}
	\date{}
	\maketitle{}
	

\abstract{
For any graph $G=(V,E)$, a subset $S\subseteq V$ is called 
{\it an isolating set}
of $G$ 
if $V\setminus N_G[S]$ is an independent set 
of $G$, where $N_G[S]=S\cup N_G(S)$,
and {\it the isolation number} of $G$,
denoted by $\iota(G)$, is the size 
of a smallest isolating set of $G$.
In this article, we 
show that the isolation number of 
the middle graph of $G$
is equal to 
the size of a smallest maximal matching 
of $G$.
}


\noindent {\bf Keywords:} Domination; Partial Domination; Isolation number; Middle Graph.

\noindent {\bf AMS subject classification:} 05C05, 05C12, 05C76

\baselineskip=0.65 cm

\section{Introduction}

All graphs considered in this article
are finite, undirected, nonempty and simple, and for standard graph theory terminology not given here we refer to \cite{W}. 
For a simple graph $G$, 
let $V(G)$ and $E(G)$ denote its vertex set and edge set.
For any $v\in V(G)$, 
the {\it open neighborhood} of $v$ in $G$, 
denoted by $N_G(v)$ (or simply $N(v)$),
is the set of neighbors of $v$ in $G$, and the {\it closed neighborhood} of
$v$ in $G$, denoted by $N_G[v]$, 
is defined to be $N_G(v)\cup\{v\}$.
The {\it degree} of $v$ in $G$, 
denoted by $d_G(v)$ (or simply $d(v)$),
is defined to be $|N_G(v)|$. 
If $d_G(v)=0$, then $v$ is an 
{\it isolate} vertex of $G$. 
If $d_G(v)=1$, then $v$ is called a {\it leaf} of $G$ 
and the only neighbor of a leaf is 
called a
{\it support vertex}.
The {\it minimum} and {\it maximum} degree of $G$ are respectively denoted by $\delta(G)$ and $\Delta(G)$.

 For any $S\subseteq V(G)$, the 
 {\it open neighborhood} of $S$ in $G$, 
 denoted by 
 $N_G(S)$ (or simply $N(S)$), 
 is $\{N_G(v): v\in S\}$, 
 and the {\it closed neighborhood} of $S$ in $G$, denoted by $N_G[S]$
 (or simply $N[S]$), 
 is $N(S)\cup\{S\}$. 
 Let $G[S]$ denote the subgraph of $G$ 
 induced by $S$, and $G-S$ the subgraph $G[V(G)\setminus S]$.  
 Given a graph $H$, a graph $G$ is 
 called $H$-{\it free} 
 if $G$ does not contain a subgraph 
 isomorphic to $H$.
 For any positive integer $n$ and $k$,
 let $P_n, C_n$ and $K_n$ denote the 
 path graph, cycle graph and 
 complete graph with $n$ vertices 
 respectively, and let $K_{n,k}$ 
 denote the complete bipartite graph 
 whose partite sets are of size $n$ and $k$ respectively.
 
 For any $S\subseteq V(G)$, 
 $S$ is called a {\it dominating set} of 
 $G$ if $N_G[S]=V(G)$.
 The {\it domination number} of $G$, denoted by $\gamma(G)$,
 is the minimum value of $|S|$ 
 over all dominating set $S$ of G. 
 For any set $\setf$ of graphs, 
 $S\subseteq V(G)$ is said to be 
 $\setf$-isolating if $G[V(G)\setminus N_G[S]]$ is $H$-free for each
 $H\in \setf$.
 Any subset $S$ of $V(G)$ 
 is called an {\it isolating set} of $G$
if  $V(G)\setminus N_G[S]$ 
is an independent set of $G$.
Clearly, $S$ is an isolating set of $G$
if and only if $S$ is a
$\{K_2\}$-isolating set.
The isolation number of $G$, 
denoted by $\iota(G)$, is 
the minimum value of $|S|$
over all isolating set $S$ of $G$.
Obviously, $\iota(G)\le \gamma(G)$,
as each dominating set of $G$ is also 
an isolating set of $G$.
For example, $\iota(P_5)=1$ while 
$\gamma(P_5)=2$. 

The study of isolation number of a graph
was initiated by Caro and Hansberg \cite{Caroa} in 2017.
They showed that  $\iota(G)\le\frac{n}{3}$ for any 
graph $G$ of order $n\ge6$
and $\iota(G)\le \frac{n}{4}$ for any  maximal outerplanar graph $G$ of order $n\ge4$. 
Tokunaga et al. \cite{Tokunaga} 
proved that for any maximal outerplanar graph $G$ of order $n$ with $n_2$ vertices of degree $2$, 
$\iota (G)\le \frac{n+n_2}{5}$ if $n_2\le \frac{n}{4}$, and $\iota (G)\le \frac{n-n_2}{3}$ otherwise. 
Lema\'{n}ska et al. \cite{MJ}
studied the relation between $\iota(T)$
and $\gamma(T)$ for a tree $T$
and showed that 
$\iota(T) = \frac n3$ implies that 
$\gamma(T)=\iota(T)$. 

Subdividing an edge $uv$ in a graph $G$ is a graph operation 
which  replaces edge $uv$ by 
a path of length $2$ joining $u$ and $v$.
The {\it middle} graph of  $G$,
denoted by $\mid(G)$,
 is the graph obtained from $G$ by subdividing each edge of $G$ exactly once and joining each pair of the added vertices on adjacent edges of $G$, as shown in Figure~\ref{MG}.

\begin{figure}[htbp]
	\centering
	
\includegraphics[width=7cm]{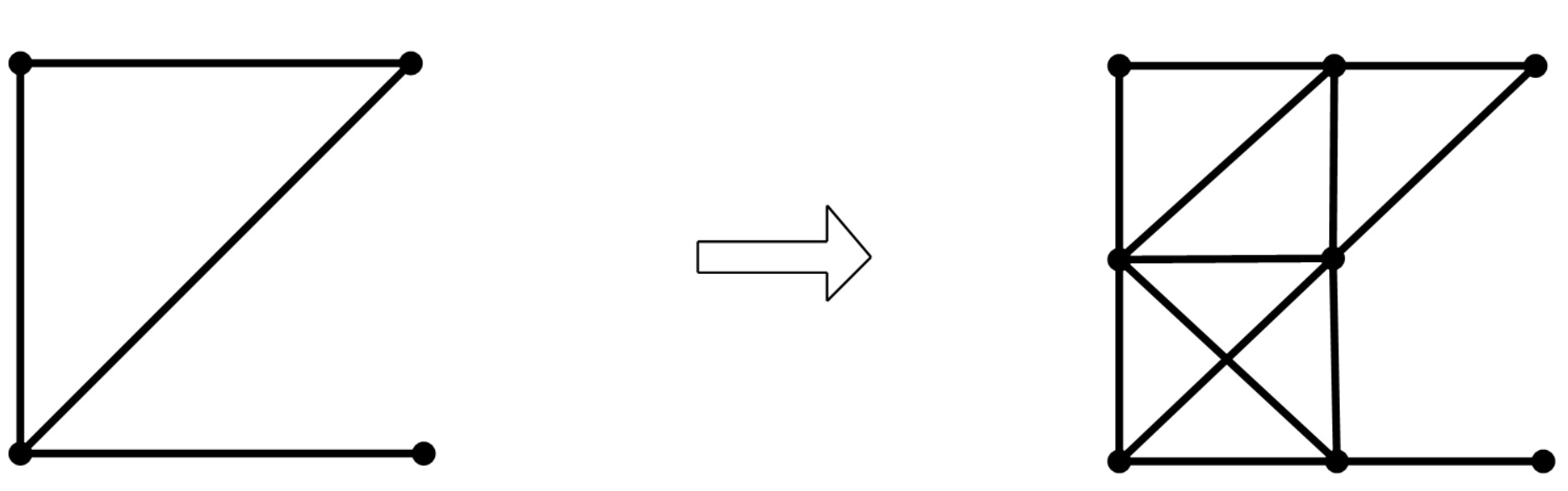}
	
	(a) $G$ \hspace{3.5 cm} (b) $Mid(G)$
	
	\caption{$G$ and its middle graph $\mid(G)$} 
	\label{MG}
\end{figure}

In the paper, we investigate the isolation numbers of middle graphs. 
In this article, we will establish the following conclusion 
on $\iota(\mid(G))$ 
for any graph $G$.

\begin{thm}\label{main1}
	For any graph $G$, $\iota(\mid(G))$ 
	is equal to $\nu'(G)$, 
	the minimum value of 
	$|M|$ over all maximal matching $M$ of $G$.
\end{thm}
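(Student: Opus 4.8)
The plan is to exploit the explicit structure of $\mid(G)$ to convert the isolation problem into a vertex-cover problem on $G$. Recall that $V(\mid(G))=V(G)\cup E(G)$, that $V(G)$ is an independent set in $\mid(G)$, that the edge-vertices $E(G)$ induce the line graph of $G$, and that an original vertex $v$ is adjacent in $\mid(G)$ exactly to the edge-vertices incident with $v$ in $G$. Everything will flow from one characterization lemma, which I regard as the conceptual core of the argument.

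First I would prove the following. Write an arbitrary $S\subseteq V(\mid(G))$ as $S=A\cup B$ with $A\subseteq V(G)$ and $B\subseteq E(G)$, and set $W=A\cup V(B)$, where $V(B)$ denotes the set of endpoints in $G$ of the edges in $B$. The claim is that $S$ is an isolating set of $\mid(G)$ if and only if $W$ is a vertex cover of $G$. To see this I would compute $V(\mid(G))\setminus N_{\mid(G)}[S]$ directly: an original vertex $v$ survives iff $v\notin W$, and an edge-vertex $f$ survives iff $f$ has no endpoint in $W$ (its neighbours in $\mid(G)$ being its two endpoints together with the edge-vertices sharing an endpoint with it). Hence every surviving edge-vertex $f$ has both endpoints among the surviving original vertices $V(G)\setminus W$, so $f$ is adjacent in $\mid(G)$ to one of them; the surviving set is therefore independent if and only if no edge-vertex survives, i.e. if and only if every edge of $G$ meets $W$, which says precisely that $W$ is a vertex cover.

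Given this lemma, the equality splits into two inequalities. For $\iota(\mid(G))\le\nu'(G)$ I would take a smallest maximal matching $M$ of $G$; its saturated-vertex set $V(M)$ is a vertex cover, so applying the lemma with $A=\emptyset$ and $B=M$ shows that $M$, viewed as a set of edge-vertices, is an isolating set, giving $\iota(\mid(G))\le|M|=\nu'(G)$. For the reverse inequality I would start from a minimum isolating set $S=A\cup B$, so $W=A\cup V(B)$ is a vertex cover with $|A|+|B|=\iota(\mid(G))$. By minimality no vertex of $A$ is isolated in $G$ (an isolated vertex covers no edge and could be dropped from $A$ while keeping $W$ a vertex cover), so for each $a\in A$ I can choose an edge $e_a$ incident with $a$ and set $B'=B\cup\{e_a:a\in A\}$. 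Then $|B'|\le|A|+|B|$ and $V(B')\supseteq A\cup V(B)=W$, so $V(B')$ is still a vertex cover; equivalently $B'$ is an edge dominating set of $G$.

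The hard part will be the final step: passing from the edge set $B'$ (whose endpoints form a vertex cover) to a genuine maximal matching of no larger size. This is exactly the classical identity that the edge domination number of a graph equals the size of a smallest maximal matching: any edge dominating set can be transformed, without increasing its cardinality, into a matching that remains edge dominating and is therefore maximal. Invoking it yields a maximal matching $M'$ with $|M'|\le|B'|\le|A|+|B|=\iota(\mid(G))$, whence $\nu'(G)\le\iota(\mid(G))$, and combined with the first inequality this gives $\iota(\mid(G))=\nu'(G)$. If I preferred a self-contained treatment to citing this identity, I would reprove it inline by the standard swapping argument: as long as $B'$ is not a matching, two of its edges share an endpoint, and one can delete or reroute an edge so as to strictly decrease the number of such conflicts while preserving the vertex-cover property of the endpoint set, terminating at a matching whose saturated set is a vertex cover, i.e. a maximal matching.
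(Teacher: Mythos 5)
Your proposal is correct, and it follows the same broad strategy as the paper --- translate isolation in $\mid(G)$ into a vertex-cover condition on $G$, then convert an optimal edge set into a maximal matching --- but your decomposition into lemmas is genuinely different. Your single characterization lemma (for arbitrary $S=A\cup B$, the set $S$ is isolating iff $A\cup V(B)$ is a vertex cover of $G$) subsumes in one step what the paper does in two: the paper first normalizes a minimum isolating set so that it contains no original vertices (Lemma~\ref{le3-1}, via an exchange replacing $v_i$ by an incident $m_{i,j}$ while minimizing $|S\cap V|$), and only then characterizes isolating sets consisting of edge-vertices (Lemma~\ref{le3-2}). Your version is more general, since it describes \emph{all} isolating sets rather than only normalized ones; the price is that you must dispose of the leftover vertices in $A$ afterwards, which you do correctly (isolated vertices of $G$ are excluded by minimality, the rest are absorbed into incident edges to form $B'$). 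For the final step the paper proves from scratch (Lemma~\ref{le3-0}) that a minimum edge set whose endpoints form a vertex cover may be taken to be a maximal matching, maximizing $|V(E_0)|$ as a potential and using a delete-or-reroute case analysis; you instead cite the classical identity that the edge domination number equals the minimum maximal matching size, which is precisely the content of that lemma. If you do write out the inline swapping argument, make the case split explicit: reroute $uw$ to $wx$ only when $w$ has a neighbour $x$ outside the current endpoint set, and delete $uw$ otherwise, checking that the edges at $w$ remain covered because all of $N_G(w)$ then lies in the surviving endpoint set. Your conflict-counting potential does decrease under both moves, but the correctness of each individual swap is where the real content lies.
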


By Theorem~\ref{main1}, 
 $\iota(G)=1$ if $G$ is a star. For the upper bounds of $\iota(G)$, we obtain the following conclusions. 

\begin{thm}\label{main2}
For any graph $G$ of order $n$,
$\iota(\mid(G))\le \frac n2$,
where the equality holds if and only if 
$G\cong K_{n}$ or $G\cong K_{n/2,n/2}$ 
when  $n$ is odd, and every maximal matching of $G$ is near-perfect otherwise. 
\end{thm}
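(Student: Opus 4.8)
The whole statement reduces, via Theorem~\ref{main1}, to a question about $\nu'(G)$, the minimum size of a maximal matching of $G$; so I would work entirely with $\nu'(G)$ and translate back only at the very end. For the inequality, the key point is that a maximal matching $M$ saturates exactly $2|M|$ distinct vertices, so $2|M|\le n$ and hence $|M|\le\lfloor n/2\rfloor$. Since $G$ is nonempty it has a maximal matching, and the one realizing $\nu'(G)$ already satisfies $\nu'(G)\le\lfloor n/2\rfloor\le n/2$. This gives $\iota(\mid(G))=\nu'(G)\le n/2$.

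For the equality cases I would first isolate the following observation, which drives everything: since \emph{every} maximal matching has size at most $\lfloor n/2\rfloor$, we have $\nu'(G)=\lfloor n/2\rfloor$ if and only if every maximal matching of $G$ has size exactly $\lfloor n/2\rfloor$. I then split on the parity of $n$. If $n$ is odd this says every maximal matching leaves precisely one vertex unsaturated, i.e. is near-perfect, which is exactly the stated condition in the odd regime, and this direction is essentially immediate from the observation. If $n$ is even it says every maximal matching is perfect, i.e. $G$ is randomly matchable (and note that the branch producing $K_{n/2,n/2}$ can only occur for even $n$).

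The substantive step, which I expect to be the main obstacle, is the even case: I must show that a connected graph in which every maximal matching is perfect is isomorphic to $K_n$ or $K_{n/2,n/2}$, together with the routine converse that these two families are indeed randomly matchable. The clean route is to invoke Sumner's classification of randomly matchable graphs, whose connected members are exactly the even complete graphs and the balanced complete bipartite graphs. If a self-contained argument is preferred, I would fix a perfect matching $M=\{u_1v_1,\dots,u_{n/2}v_{n/2}\}$ and, for each pair of matching edges, examine which of the four edges joining $\{u_i,v_i\}$ to $\{u_j,v_j\}$ are present; the randomly-matchable hypothesis forbids the ``asymmetric'' patterns, since any such pattern lets one assemble a maximal matching that strands a vertex, and propagating these local constraints through a connected $G$ forces either all edges (giving $K_n$) or a balanced bipartition with all cross edges (giving $K_{n/2,n/2}$). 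Making this local-to-global propagation airtight is the delicate part, which is precisely why citing Sumner's theorem is the economical choice.

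One subtlety I would flag explicitly is that the phrase ``any graph'' hides a connectivity assumption in the even case: the property ``every maximal matching is perfect'' passes to each connected component, so in full generality the even-order extremal graphs are exactly the disjoint unions of graphs $K_{2m}$ and $K_{m,m}$, and the clean statement $G\cong K_n$ or $G\cong K_{n/2,n/2}$ is its connected instance. Assembling the pieces, Theorem~\ref{main1} turns the equality $\iota(\mid(G))=\lfloor n/2\rfloor$ into $\nu'(G)=\lfloor n/2\rfloor$, and the parity split together with Sumner's theorem delivers the claimed dichotomy.
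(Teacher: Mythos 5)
Your proposal is correct and follows essentially the same route as the paper's own proof (given as Theorem~\ref{th3-6}): reduce to $\nu'(G)$ via Theorem~\ref{main1}, split on the parity of $n$, and invoke Sumner's classification of connected randomly matchable graphs for the even case. You also rightly flag the two points the formal statement of Theorem~\ref{main2} glosses over, namely that the parity labels are swapped (the $K_n$ / $K_{n/2,n/2}$ branch belongs to even $n$) and that the even-case characterization as stated requires $G$ to be connected.
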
 

\begin{thm}\label{main3}
For any tree $T$ of order $n$, where $n\ge 3$,  
$\iota (\mid(T))\le \lfloor \frac{n-1}{2}\rfloor$,
where the upper bound is sharp.
\end{thm}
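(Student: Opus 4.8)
The plan is to reduce the statement to a fact about matchings in $T$ itself. By Theorem~\ref{main1}, $\iota(\mid(T))=\nu'(T)$, the minimum size of a maximal matching of $T$, so it suffices to produce a single maximal matching $M'$ of $T$ with $|M'|\le\lfloor\frac{n-1}{2}\rfloor$, and then to exhibit, for every $n\ge 3$, a tree for which equality is forced.

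First I would build $M'$ so that it deliberately wastes one leaf. Since $n\ge 3$, pick any leaf $v_0$ of $T$ and let $v_1$ be its support vertex; then $d_T(v_1)\ge 2$ (otherwise $T\cong K_2$), so $v_1$ has a neighbour $v_2\ne v_0$. Let $M'$ be any maximal matching of $T$ containing the edge $v_1v_2$ (start from $\{v_1v_2\}$ and greedily add edges until no more can be added). Because the only neighbour of $v_0$ is $v_1$, which is already saturated by $v_1v_2$, the vertex $v_0$ can never be matched, so $v_0$ stays unsaturated in $M'$. The set $U$ of $M'$-unsaturated vertices therefore satisfies $|U|\ge 1$, and since $2|M'|=n-|U|$ we have $|U|\equiv n\pmod 2$. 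Hence $|U|\ge 1$ when $n$ is odd and $|U|\ge 2$ when $n$ is even, which in both cases yields $|M'|=\frac{n-|U|}{2}\le\lfloor\frac{n-1}{2}\rfloor$. This gives $\iota(\mid(T))=\nu'(T)\le |M'|\le\lfloor\frac{n-1}{2}\rfloor$.

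For sharpness I would use subdivided stars. Let $S_k$ be the spider with centre $c$ and $k$ legs $c\,a_i\,b_i$ ($1\le i\le k$), i.e.\ the tree obtained from $K_{1,k}$ by subdividing every edge; here $n=2k+1$ is odd and $\lfloor\frac{n-1}{2}\rfloor=k$. The matching $\{a_ib_i:1\le i\le k\}$ is maximal of size $k$, so $\nu'(S_k)\le k$. Conversely, in any maximal matching the edge $a_ib_i$ forces a matched edge incident to $a_i$ (otherwise $a_ib_i$ could be added), and the only edges at $a_i$ are $ca_i$ and $a_ib_i$; since distinct legs share only $c$, each matched edge is incident to at most one $a_i$, forcing at least $k$ edges. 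Thus $\nu'(S_k)=k=\lfloor\frac{n-1}{2}\rfloor$. For even $n=2k+2$ I would attach one extra leaf $\ell$ to $c$: the same counting gives $\nu'\ge k$, while $\{ca_1\}\cup\{a_ib_i:2\le i\le k\}$ is a maximal matching of size $k=\frac{n}{2}-1=\lfloor\frac{n-1}{2}\rfloor$, so the bound is attained for every parity.

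The short upper-bound argument is the substance here; the only place needing care is the parity bookkeeping on $|U|$, which is exactly what upgrades the trivial estimate $|M'|\le n/2$ to the stated $\lfloor\frac{n-1}{2}\rfloor$. The remaining work, and the main (if modest) obstacle, is the lower-bound half of the sharpness claim: checking that the displayed maximal matchings of $S_k$ and of its one-leaf extension are genuinely smallest, for which the ``one matched edge per leg'' counting above is the decisive point.
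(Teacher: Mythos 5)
Your proposal is correct, and the upper-bound half takes a genuinely different and more elementary route than the paper. The paper deduces $\iota(\mid(T))\le\lfloor\frac{n-1}{2}\rfloor$ from its Theorem~\ref{th3-6}: for odd $n$ the bound $\lfloor\frac n2\rfloor$ already equals $\frac{n-1}{2}$, and for even $n$ a tree is neither $K_n$ nor $K_{n/2,n/2}$, so the equality case of Theorem~\ref{th3-6} (which rests on Sumner's classification of randomly matchable graphs) forces $\iota(\mid(T))\le\frac{n-2}{2}$. You instead apply Theorem~\ref{main1} and directly build a maximal matching that saturates a support vertex away from its leaf, so that the leaf stays unsaturated; the parity bookkeeping on the unsaturated set then gives $\nu'(T)\le\lfloor\frac{n-1}{2}\rfloor$ without any appeal to Sumner's theorem or to the equality analysis of Theorem~\ref{th3-6}. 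What the paper's route buys is that the stronger statement (the full characterization of extremal graphs among all connected graphs) is proved anyway and the tree bound falls out as a corollary; what your route buys is a short, self-contained argument that works for trees in isolation. For sharpness both arguments use essentially the same extremal family --- your spider $S_k$ is exactly the tree $T_1$ of Figure~\ref{f7}, and your one-leaf extension plays the role of $T_2$ --- but you go further than the paper by actually verifying $\nu'\ge k$ via the ``each $a_i$ must be saturated, and no matching edge covers two of them'' count, whereas the paper only asserts $\nu'(T_i)=k$. All steps check out: the support vertex has degree at least $2$ since $n\ge3$, the greedy extension of $\{v_1v_2\}$ cannot ever saturate $v_0$, and the displayed matchings in your extremal examples are indeed maximal and of minimum size.
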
 

The following sections are arranged 
as follows. 
For any $E_0\subseteq E(G)$,
let $V(E_0)=\{u,v: uv\in E_0\}$.
In Section 2, we show that 
$\iota(\mid(G))=\tau(G)$,
where $\tau(G)$ is the minimum value of 
$|E_0|$ over all subset $E_0$ of $E(G)$ 
such that $V(G)\setminus V(E_0)$ 
is an independent set of $G$, and then  prove that $\tau(G)$ is equal to 
the minimum value of $|M|$
over all maximal matching $M$ of $G$,
and thus Theorem~\ref{main1} follows.
In Section 3, 
we establish some result on the lower bounds and upper bounds of  $\iota(\mid(G))$, and in Sections 4 and 5, 
we proved Theorems~\ref{main2} and~\ref{main3} respectively.

\section{Proof of Theorem~\ref{main1}}

In this section, let $G=(V,E)$ be a graph with $V=\{v_1,\,v_2,\dots,v_n\}$ and  $V(\mid(G))=V\cup\mathcal{M}$,
where 
$\setm(G)=\{m_{i,j}:
v_iv_j\in E(G), 1\le i<j\le n \}$.
	For any $E_0\subseteq E(G)$, 
let $\setm(E_0)$ denote the set of vertices $m_{i,j}\in \setm$ 
such that  $v_iv_j\in E_0$.
Clearly, $|\setm(E_0)|=|E_0|$
and $|V(E_0)|\le 2|E_0|$.

	Let $\Theta(G)$ denote the family of 
	$E_0\subseteq E(G)$ 
	such that 
	$V(G)\setminus V(E_0)$
	is independent in $G$.
	Clearly, 
	$E_0\in \Theta(G)$ 
	for every maximal matching $E_0$ of $G$. 
	Let $\tau(G)$ denote the minimum value of $|E_0|$ 
	over all $E_0\in \Theta(G)$.
	For example, if $G$ is the graph in 
	Figure~\ref{MG}, then $\tau(G)=1$
	and $\iota(\mid(G))=1$.
	
	In the section, we shall show that  	$\iota(\mid(G))=\tau(G)$ holds for any graph $G$.

\begin{Lem}\label{le3-1}
For any graph $G=(V,E)$, 
there exists $E_0\subseteq E(G)$ 
such that $\setm(E_0)$ is 
an isolating set of $\mid(G)$ and 
$|\setm(E_0)|=\iota(\mid(G))$. 
\end{Lem}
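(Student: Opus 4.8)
The plan is to start from an arbitrary minimum isolating set $S$ of $\mid(G)$, so that $|S|=\iota(\mid(G))$, and to transform it step by step into an isolating set of the same size lying entirely inside $\setm$; writing $E_0=\{v_iv_j:m_{i,j}\in S\}$ for the resulting set then yields the claim. The engine of the whole argument is a single neighbourhood-containment observation in $\mid(G)$: for every original vertex $v_i$ and every edge $v_iv_j\in E(G)$,
\[
N_{\mid(G)}[v_i]=\{v_i\}\cup\{m_{i,k}:v_iv_k\in E(G)\}\subseteq N_{\mid(G)}[m_{i,j}],
\]
since $m_{i,j}$ is adjacent in $\mid(G)$ to $v_i$ and to every $m_{i,k}$ sharing the endpoint $v_i$. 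Thus replacing $v_i$ by $m_{i,j}$ can only enlarge the closed neighbourhood of the chosen set, hence can only shrink the set of uncovered vertices, and therefore preserves the property of being isolating.

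First I would discard from $S$ any $v_i$ that is isolated in $G$: such a vertex is isolated in $\mid(G)$ as well, so leaving it uncovered never creates an edge, whence $S\setminus\{v_i\}$ is still isolating; by minimality of $S$ this shows $S$ contains no such vertex to begin with. Next, for a vertex $v_i\in S\cap V$ with $d_G(v_i)\ge 1$, I claim there is an edge $v_iv_j$ with $m_{i,j}\notin S$. Otherwise every $m_{i,j}$ with $v_iv_j\in E(G)$ already lies in $S$; picking any one of them, the containment above gives $N_{\mid(G)}[v_i]\subseteq N_{\mid(G)}[m_{i,j}]\subseteq N_{\mid(G)}[S\setminus\{v_i\}]$, so $v_i$ is redundant and $S\setminus\{v_i\}$ is a smaller isolating set, contradicting minimality.

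With such a $j$ in hand I set $S'=(S\setminus\{v_i\})\cup\{m_{i,j}\}$. Since $m_{i,j}\notin S$, we have $|S'|=|S|$, and by the containment $N_{\mid(G)}[S']\supseteq N_{\mid(G)}[S]$, so $S'$ is again isolating and hence again a minimum isolating set, but with one fewer vertex of $V$. Iterating this swap over the finitely many $V$-vertices of the current minimum set terminates in a minimum isolating set $S^{*}\subseteq\setm$, and $E_0=\{v_iv_j:m_{i,j}\in S^{*}\}$ satisfies $\setm(E_0)=S^{*}$ and $|\setm(E_0)|=\iota(\mid(G))$.

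The one point that needs care — and the only real obstacle — is guaranteeing at each step that the target vertex $m_{i,j}$ is not already present, so that the swap is size-preserving rather than size-increasing; this is exactly what the redundancy argument secures, by leaning on the minimality of the current set rather than trying to choose $j$ cleverly. One should also confirm that the iteration is well-founded: each swap strictly decreases $|S\cap V|$ while keeping the set a minimum isolating set, so after at most $|S|$ steps no vertex of $V$ remains.
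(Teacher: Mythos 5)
Your proposal is correct and follows essentially the same route as the paper: both rest on the containment $N_{\mid(G)}[v_i]\subseteq N_{\mid(G)}[m_{i,j}]$ and use it to swap each non-isolated $v_i\in S\cap V$ for a subdivision vertex $m_{i,j}\notin S$ without increasing the size, the paper phrasing this via an extremal choice of $S$ minimizing $|S\cap V|$ where you phrase it as a terminating iteration. Your redundancy argument for finding a $j$ with $m_{i,j}\notin S$ is a minor variant of the paper's Claim~1 and is equally valid.
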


\begin{proof}
		\setcounter {clm}{0}
Let $S$ be a minimum isolating set of $\mid(G)$. Then $|S|=\iota(\mid(G))$. 
We further assume that 
$|S\cap V|$ is as small as possible. 
We shall show that $S\cap V=\emptyset$.

\Clm{cl0}
{
	If $m_{i,j}\in S$, then 
	$V(\mid(G))\setminus N_{\mid(G)}[S']
	=V(\mid(G))\setminus N_{\mid(G)}[S]$, 
	where 
	$S'=S\setminus \{v_i,v_j\}$.
}

Assume that $m_{i,j}\in S$. 
Observe that 
$N_{\mid(G)}[v_i]
\cup N_{\mid(G)}[v_j]
\subseteq 
N_{\mid(G)}[m_{i,j}]$,
implying that 
$V(\mid(G))\setminus N_{\mid(G)}[S']
=V(\mid(G))\setminus N_{\mid(G)}[S]$.
The claim holds. 

\Clm{cl1}
{
If $m_{i,j}\in S$, then 
$\{v_i,v_j\}\cap S=\emptyset$. 
}

Assume that $m_{i,j}\in S$. 
By Claim 1, $S'=S\setminus \{v_i,v_j\}$
is also an isolating of 
$\mid(G)$. 
Since $S$ is a minimum isolating set of $\mid(G)$, 
Claim~\ref{cl1} holds.

\Clm{cl2}
{
	For any $v_i\in S$, $v_i$ is not 
	an isolated vertex in $G$.
}

If $v_i$ is an isolated vertex in $G$,
then $S\setminus \{v_i\}$ is also an isolating set of $\mid(G)$,
contradicting the minimality of $S$. 
Claim~\ref{cl2} holds.

\Clm{cl3}
{$S\cap V(G)=\emptyset$. 
}

Suppose Claim~\ref{cl3} fails
and $v_i\in S$.
By Claim~\ref{cl2}, $N_G(v_i)\ne \emptyset$. 
Let $v_j$ be a vertex in $N_G(v_i)$. 
Without loss of generality, 
assume that $i<j$. 
By Claim~\ref{cl1}, $m_{i,j}\notin S$.
Now let 
$S_0=(S\setminus \{v_i\})\cup 
\{m_{i,j}\}$.
By Claim~\ref{cl0}, 
$V(\mid(G))\setminus N_{\mid(G)}[S_0]
=V(\mid(G))\setminus N_{\mid(G)}[S]$,
implying that $S_0$ is an isolating set of 
$\mid(G)$.
But, $|S_0|=|S|$ and 
$|S_0\cap V|<|S\cap V|$,
contradicting to the assumption on $S$.
Thus, Claim~\ref{cl3} holds.

By Claim~\ref{cl3}, $S\subseteq \setm(G)$. 
Now let $E_0$ be the set of edges $v_iv_j\in E(G)$, where $1\le i<j\le n$,
such that $m_{i,j}\in S$. 
Obviously, $\setm(E_0)=S$
and $|\setm(E_0)|=|S|=\iota(G)$. 
Hence the result holds.
\end{proof}
	
	\begin{Lem}\label{le3-2}
		For any subset $E_0$ of $E(G)$,
		$\setm(E_0)$ is an isolating set of 
		$\mid(G)$ 
		if and only if 
		$V(G)\setminus V(E_0)$ 
		is independent in  $G$.
	\end{Lem}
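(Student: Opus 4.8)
The plan is to compute the complement set $R:=V(\mid(G))\setminus N_{\mid(G)}[\setm(E_0)]$ explicitly, and then read off its independence directly. First I would record the three adjacency rules governing $\mid(G)$: the original vertices $V$ form an independent set (every original edge is subdivided); an original vertex $v_i$ is adjacent to a subdivision vertex $m_{k,l}$ precisely when $i\in\{k,l\}$; and two subdivision vertices $m_{k,l}, m_{p,q}$ are adjacent precisely when the edges $v_kv_l$ and $v_pv_q$ of $G$ share an endpoint (so $\setm(G)$ induces the line graph $L(G)$). These three rules control everything that follows.

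Next I would determine $R$ by splitting into the two vertex types. Since an original vertex $v_i$ never lies in $\setm(E_0)$, we have $v_i\in N_{\mid(G)}[\setm(E_0)]$ iff $v_i$ is adjacent to some $m_{k,l}$ with $v_kv_l\in E_0$, i.e. iff $v_i$ is an endpoint of an edge of $E_0$; hence among original vertices $R$ picks out exactly $V\setminus V(E_0)$. For a subdivision vertex $m_{k,l}$ I would show that $m_{k,l}\in R$ iff both $v_k,v_l\in V\setminus V(E_0)$: if some endpoint, say $v_k$, lies in $V(E_0)$, then either $v_kv_l\in E_0$ (so $m_{k,l}\in\setm(E_0)$) or $v_kv_l$ meets an edge of $E_0$ at $v_k$ (so $m_{k,l}$ is adjacent to $\setm(E_0)$), and either way $m_{k,l}\notin R$; conversely, if $v_k,v_l\notin V(E_0)$ then no edge of $E_0$ is incident to $v_k$ or $v_l$, so $m_{k,l}$ is neither in $\setm(E_0)$ nor adjacent to it. Consequently $R$ consists of the original vertices $V\setminus V(E_0)$ together with those subdivision vertices $m_{k,l}$ for which $v_kv_l$ is an edge of $G$ lying inside $V\setminus V(E_0)$.

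With this description of $R$ in hand, both directions follow at once. If $V\setminus V(E_0)$ is independent in $G$, there are no such interior edges, so $R$ reduces to the original vertices $V\setminus V(E_0)$, which are pairwise nonadjacent in $\mid(G)$; hence $R$ is independent and $\setm(E_0)$ is an isolating set. For the converse, if $V\setminus V(E_0)$ contains an edge $v_kv_l$, then $v_k\in R$ and $m_{k,l}\in R$, and by the second adjacency rule $v_k$ is adjacent to $m_{k,l}$, so $R$ is not independent and $\setm(E_0)$ fails to be isolating.

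The only delicate point, and the step I would write most carefully, is the characterization of which subdivision vertices lie in $R$, since one must rule out all three ways $m_{k,l}$ could be dominated (being itself in $\setm(E_0)$, or touching $E_0$ through $v_k$, or through $v_l$) and verify these are exhausted exactly by the condition $v_k,v_l\in V\setminus V(E_0)$. Once that equivalence is pinned down, the stated bi-implication is immediate.
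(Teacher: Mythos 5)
Your proposal is correct and follows essentially the same route as the paper: the paper's proof simply observes that $\mid(G)-N_{\mid(G)}[\setm(E_0)]$ is exactly $\mid(G-V(E_0))$ and reads off the equivalence, while you compute that residual set vertex-by-vertex and arrive at the same identification. Your version just supplies the details the paper declares to be "easily shown."
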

	
	\begin{proof}
		\setcounter {clm}{0}
		For any $E_0\subseteq E(G)$, 
		by definition, 
		$\mid(G)-N_{\mid(G)}[\setm(E_0)]$ 
		is exactly the graph $\mid(G-V(E_0))$.
		It can be  shown easily that 
		the following statements are equivalent, with an illustration in Figure~\ref{f6}.
		
		\begin{enumerate}
			\renewcommand{\theenumi}{\rm (\roman{enumi})}
			
			\item $\setm(E_0)$ is an isolating set of $\mid(G)$; 
			\item $\mid(G-V(E_0))$ has no edges;
			\item $V(G)\setminus 
			V(E_0)$ is independent in $G$.
		\end{enumerate}
Hence the lemma holds.
\end{proof}	
\begin{figure}[htbp]
	\centering
	\includegraphics[width=13cm]{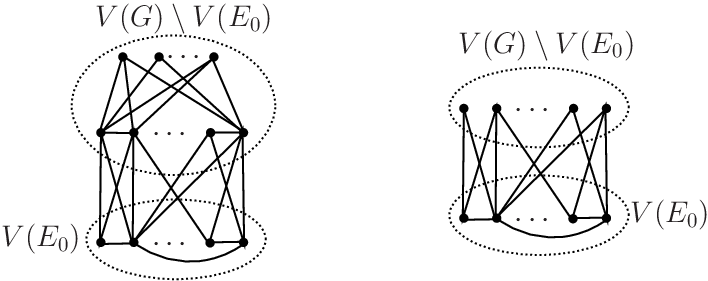}
	
{}\hspace{2 cm}	(a) \hspace{4 cm} (b) $V\setminus V(E_0)$ is independent 
	
	\caption{$E_0\subseteq E(G)$ } 
	\label{f6}
\end{figure}

By Lemmas~\ref{le3-1} and~\ref{le3-2}, we obtain the following conclusion 
on $\iota(\mid(G))$. 
	
	\begin{pro}\label{pro2-1}
		For any graph $G$, 
		$\iota (\mid(G))=\tau(G)$.
	\end{pro}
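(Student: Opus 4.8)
The plan is to derive the equality $\iota(\mid(G))=\tau(G)$ by proving the two inequalities separately, each following almost immediately from the machinery already set up. The key bridging facts are the identity $|\setm(E_0)|=|E_0|$, which lets me translate cardinalities freely between subsets of $E(G)$ and their associated vertex sets in $\mid(G)$, together with Lemma~\ref{le3-2}, whose ``if and only if'' form is exactly what is needed to pass between isolating sets of $\mid(G)$ and members of the family $\Theta(G)$.

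First I would establish $\iota(\mid(G))\le \tau(G)$. Choose $E_0\in\Theta(G)$ with $|E_0|=\tau(G)$. Since $E_0\in\Theta(G)$, the set $V(G)\setminus V(E_0)$ is independent in $G$, so the backward direction of Lemma~\ref{le3-2} guarantees that $\setm(E_0)$ is an isolating set of $\mid(G)$. As $\iota(\mid(G))$ is the minimum size of an isolating set, I obtain $\iota(\mid(G))\le|\setm(E_0)|=|E_0|=\tau(G)$.

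For the reverse inequality $\tau(G)\le\iota(\mid(G))$, I would invoke Lemma~\ref{le3-1} to produce an $E_0\subseteq E(G)$ for which $\setm(E_0)$ is an isolating set of $\mid(G)$ with $|\setm(E_0)|=\iota(\mid(G))$. The forward direction of Lemma~\ref{le3-2} then forces $V(G)\setminus V(E_0)$ to be independent, i.e.\ $E_0\in\Theta(G)$, whence $\tau(G)\le|E_0|=|\setm(E_0)|=\iota(\mid(G))$. Combining the two inequalities yields the claim.

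I do not anticipate a genuine obstacle at this stage, since all the real work has been absorbed into the preceding lemmas: Lemma~\ref{le3-1} is where the difficulty lies, as it shows one may realize an optimal isolating set using only subdivision vertices from $\setm(G)$ (via the minimality argument combined with choosing $|S\cap V|$ as small as possible), while Lemma~\ref{le3-2} supplies the clean structural equivalence. The proposition itself is therefore essentially a bookkeeping step that matches cardinalities through $|\setm(E_0)|=|E_0|$; the only point I would double-check is that the two optimizations, namely the minimum over all isolating sets of $\mid(G)$ and the minimum over all $E_0\in\Theta(G)$, range over collections that correspond in a cardinality-preserving manner, which is precisely what the two lemmas together certify.
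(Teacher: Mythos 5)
Your proposal is correct and follows exactly the route the paper intends: the paper states Proposition~\ref{pro2-1} as an immediate consequence of Lemmas~\ref{le3-1} and~\ref{le3-2}, and your two-inequality argument (using $|\setm(E_0)|=|E_0|$ to match cardinalities) is precisely the omitted bookkeeping.
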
 


Let $\nu(G)$ be the 
matching number of $G$, and 
let $\nu'(G)$ denote the minimum 
value of $|M|$ 
over all maximal matchings
$M$ of $G$. 
Obviously, $\nu'(G)\le \nu(G)$,
where the equality holds 
whenever $G$ is equimatchable. 

By definition, $M\in \Theta(G)$ for every 
maximal matching $M$ of $G$.
In the following, we show that 
$\tau(G)$ can be determined by 
considering the smallest maximal matchings of $G$, and thus Theorem~\ref{main1} follows
from Proposition~\ref{pro2-1}.

\begin{Lem}\label{le3-0}
	For any graph $G$, 
	there exists a maximal matching 
	$E_0$ of $G$ 
	such that  $|E_0|=\tau(G)$. 
\end{Lem}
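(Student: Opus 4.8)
The plan is to prove the two inequalities $\tau(G)\le \nu'(G)$ and $\nu'(G)\le \tau(G)$, which together force some maximal matching to attain $\tau(G)$. The first is immediate: every maximal matching $M$ of $G$ lies in $\Theta(G)$, so $\tau(G)\le |M|$ and hence $\tau(G)\le \nu'(G)$. For the reverse I would start from a set $E_0\in\Theta(G)$ with $|E_0|=\tau(G)$ and manufacture from it a \emph{maximal} matching of size at most $|E_0|$; since any maximal matching already lies in $\Theta(G)$, its size is then exactly $\tau(G)$, and the lemma follows.

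The key step is to exploit the minimality of $E_0$. Write $I=V\setminus V(E_0)$, so that $I$ is independent and every edge of $G$ meets $V(E_0)$. Let $F$ be the subgraph $(V(E_0),E_0)$. For each edge $e=ab\in E_0$, deleting $e$ must destroy the defining property of $\Theta(G)$, since otherwise $E_0\setminus\{e\}$ would contradict the minimality of $|E_0|$; I would show this forces some endpoint of $e$, say $b$, to have degree $1$ in $F$ and to possess a \emph{private} neighbour $z\in I$ for which $b$ is the only vertex of $V(E_0)$ adjacent to $z$. Consequently no edge of $F$ joins two vertices of $F$-degree at least $2$, so the vertices of $F$-degree at least $2$ form an independent set of $F$; a short connectivity argument then shows that every component of $F$ is a star $K_{1,k}$ (a single edge when $k=1$).

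Finally I would rebuild a matching star by star. For a component that is a single edge, I keep that edge. For a star $K_{1,k}$ with $k\ge 2$, centre $a$ and leaves $l_1,\dots,l_k$, each leaf $l_i$ has a private neighbour $z_i\in I$ by the previous step, and these $z_i$ are pairwise distinct and lie outside $V(E_0)$; I take the edges $a l_1$ and $l_2z_2,\dots,l_kz_k$. In every case the chosen edges form a matching using exactly as many edges as the component has, and their endpoints contain all vertices of the component. Taking the union $M$ over all components, the distinctness of the private neighbours guarantees that $M$ is a matching of $G$ with $|M|=|E_0|=\tau(G)$ and $V(M)\supseteq V(E_0)$; since $V(E_0)$ meets every edge of $G$, so does $V(M)$, whence $M$ is a maximal matching. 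Thus $\nu'(G)\le |M|=\tau(G)$, completing the argument.

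The main obstacle is the structural claim that minimality forces $F$ to be a disjoint union of stars, together with the existence of the private neighbours $z_i$; once this is in hand the reconstruction is routine, the only points requiring care being to verify that the private neighbours used in different stars are genuinely distinct (so that $M$ is indeed a matching) and to treat the single-edge components separately.
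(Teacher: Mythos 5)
Your overall strategy---take a minimum $E_0\in\Theta(G)$ and convert it into a maximal matching of the same size---is sound, and your structural observations are correct up to a point: minimality does force every component of $F=(V(E_0),E_0)$ to be a star (no edge of $E_0$ may join two vertices of $F$-degree at least $2$), and for a star with $k\ge 2$ leaves, deleting a pendant edge $al_i$ leaves the centre covered, so each leaf $l_i$ must have at least one neighbour in $I=V\setminus V(E_0)$. But the crux of your argument is that the chosen neighbours $z_2,\dots,z_k$ (and those chosen for the other stars) can be taken pairwise distinct, and this is exactly the point you leave unproved. Moreover, the justification you offer is false as stated: minimality does \emph{not} supply a neighbour $z\in I$ for which the leaf is the \emph{only} vertex of $V(E_0)$ adjacent to $z$. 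Take $G=C_4$ and let $E_0$ consist of the two edges at one vertex: $E_0$ is a minimum member of $\Theta(G)$ (indeed $\tau(C_4)=2$), $F$ is a star $K_{1,2}$, and the unique vertex of $I$ is adjacent to both leaves, so neither leaf has a private neighbour in your sense. The distinctness you need is in fact true, but it requires a genuine Hall-type argument: if a set $S$ of leaves (with every star keeping one exempted leaf outside $S$) violated Hall's condition into $I$, take $S$ minimal with this property; then $|N(S)\cap I|=|S|-1$ and $S\setminus\{t\}$ matches perfectly into $N(S)\cap I$ for any $t\in S$, so replacing the $|S|$ pendant edges at $S$ by these $|S|-1$ edges yields a member of $\Theta(G)$ (the only newly uncovered vertex $t$ has all of its neighbours covered) of size $|E_0|-1$, a contradiction. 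Without an argument of this kind your construction is not known to produce a matching, so as written the proof has a real gap at its decisive step.

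For comparison, the paper avoids the star decomposition and the reconstruction entirely by a secondary extremal choice: among all $E_0\in\Theta(G)$ with $|E_0|=\tau(G)$, pick one maximizing $|V(E_0)|$, and show it is already a (necessarily maximal) matching. If $v_1v_2,v_1v_3\in E_0$, then either $v_3$ has a neighbour $v_t\notin V(E_0)$, in which case swapping $v_1v_3$ for $v_3v_t$ preserves the size and strictly increases $|V(E_0)|$, or $N_G(v_3)\subseteq V(E_0)$, in which case $E_0\setminus\{v_1v_3\}$ still lies in $\Theta(G)$; either way a contradiction. Adopting that extremal choice would let you delete your entire star-by-star rebuilding and the problematic distinctness claim.
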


\begin{proof}
	Let $E_0\in \Theta(G)$ with $|E_0|=\tau(G)$ such that 
	$|V(E_0)|$ is as large as possible. 
	We shall show that $E_0$ is a matching of $G$. 
	Suppose that $E_0$ is not
	a matching of $G$. Then 
	$E_0$ contains two edges 
	$e$ and $e'$ which have a common 
	end, say 
	$e=v_1v_2$ and $e'=v_1v_3$. 
	
	\noindent {\bf Claim A}:
		$N_G(v_3)\subseteq  V(E_0)$.
	
	Suppose that $N_G(v_3)\not \subseteq  V(E_0)$.
	Then, $e''=v_3v_t\in E(G)$ 
	for some vertex $v_t\in V(G)\setminus V(E_0)$. 
	Let $E'_0=(E_0\setminus \{e'\})\cup \{e''\}$.
	It is clear that $V(G)\setminus V(E'_0)\subseteq V(G)\setminus V(E_0)$,
	implying that $E'_0\in \Theta$.
	Obviously, $|E'_0|=|E_0|$
	and $|V(E'_0)|>|V(E_0)|$,
	contradicting the assumption of $E_0$.
	Thus, Claim A holds.
	
	By Claim A,
	$\{v_3\}\cup (V(G)\setminus V(E_0))$
	is independent in $G$,
	implying that 
	Then
	$
	V(G)\setminus V(E_0'')
	$
is independent in $G$,
where 	$E_0''=E_0\setminus \{e'\}$. 
It follows that $E''_0\in \Theta(G)$ and 
	$\tau(G)\le |E''_0|=|E_0|-1$,
	contradicting the assumption of $E_0$.
	
	Hence $E_0$ is a matching of $G$.
	Since $V(G)\setminus V(E_0)$ is 
	independent in $G$, $E_0$ is a maximal 
	matching of $G$.
	The conclusion holds.
\end{proof}

Since every maximal matching of $G$ 
belongs to $\Theta(G)$, 
by Lemma~\ref{le3-0}, 
we have the following relation between $\tau(G)$ and $\nu'(G)$. 

\begin{pro}\label{pro3-3}
	$\tau(G)=\nu'(G)$.
\end{pro}

Theorem~\ref{main1} follows directly from Propositions~\ref{pro2-1} and
\ref{pro3-3}. 

\section{Bounds for $\iota(\mid(G))$
\label{sec3}
} 

In this section, we shall find upper bounds and lower bounds 
for $\tau(G)$, and apply these results and Theorem~\ref{main1} 
to find the value of $\iota(\mid(G))$ for some families of graphs.


\begin{pro}\label{pro3-2}
	If $G$ has no isolated vertices, then 
		$\iota(\mid(G))\ge \frac{\gamma(G)}{2}$.
\end{pro}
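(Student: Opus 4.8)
The plan is to prove that $\iota(\mathrm{Mid}(G))\ge \gamma(G)/2$ by exploiting the characterization $\iota(\mathrm{Mid}(G))=\tau(G)$ from Proposition~\ref{pro2-1}. So it suffices to show $\tau(G)\ge \gamma(G)/2$ whenever $G$ has no isolated vertices. Let $E_0\subseteq E(G)$ be a set realizing $\tau(G)$, so $|E_0|=\tau(G)$ and $V(G)\setminus V(E_0)$ is independent in $G$. The key idea is that $V(E_0)$ is almost a dominating set of $G$, and $|V(E_0)|\le 2|E_0|=2\tau(G)$, so if I can show $V(E_0)$ dominates $G$ (or can be repaired into a dominating set of the same size bound) then $\gamma(G)\le |V(E_0)|\le 2\tau(G)$, which rearranges to the claim.

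First I would verify that $V(E_0)$ dominates $G$. Take any vertex $w\in V(G)$. If $w\in V(E_0)$, it dominates itself. If $w\notin V(E_0)$, then $w$ lies in the independent set $V(G)\setminus V(E_0)$; since $G$ has no isolated vertices, $w$ has a neighbor $w'$ in $G$. Because $V(G)\setminus V(E_0)$ is independent, $w'$ cannot also lie outside $V(E_0)$, so $w'\in V(E_0)$, and hence $w$ is dominated by a vertex of $V(E_0)$. This shows $N_G[V(E_0)]=V(G)$, i.e.\ $V(E_0)$ is a dominating set of $G$.

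Consequently $\gamma(G)\le |V(E_0)|\le 2|E_0|=2\tau(G)$, giving $\tau(G)\ge \gamma(G)/2$, and combined with Proposition~\ref{pro2-1} this yields $\iota(\mathrm{Mid}(G))=\tau(G)\ge \gamma(G)/2$, as desired. The argument is short and I do not anticipate a serious obstacle; the only point requiring care is the use of the no-isolated-vertex hypothesis, which is exactly what guarantees that a vertex outside $V(E_0)$ has a neighbor to be dominated by, and the independence of $V(G)\setminus V(E_0)$ then forces that neighbor into $V(E_0)$. One might streamline the write-up by phrasing it purely in terms of $\tau$ and deferring the translation to $\iota(\mathrm{Mid}(G))$ to a single invocation of Proposition~\ref{pro2-1} at the end.
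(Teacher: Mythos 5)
Your proof is correct and follows essentially the same route as the paper: take an optimal edge set realizing $\tau(G)$, use the no-isolated-vertex hypothesis together with the independence of $V(G)\setminus V(E_0)$ to conclude that $V(E_0)$ dominates $G$, and then bound $\gamma(G)\le |V(E_0)|\le 2|E_0|=2\tau(G)$. The only cosmetic difference is that the paper works with a maximal matching $M_0$ realizing $\tau(G)$ (so $|V(M_0)|=2|M_0|$ exactly), while you use an arbitrary optimal $E_0\in\Theta(G)$ and the inequality $|V(E_0)|\le 2|E_0|$; both are valid.
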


\begin{proof}
By Theorem~\ref{main1},
there exists a maximal matching $M_0$ of $G$ with  $|M_0|=\tau(G)$.  
Then, $V(G)\setminus V(M_0)$ is independent in $G$. 
If $G$ has no isolated vertices, 
then each vertex in $V(G)\setminus V(M_0)$ 
is adjacent to some vertex in $V(M_0)$.
It follows that $V(M_0)$ is a dominating set of $G$, implying that $\gamma(G)\le |V(M_0)|$.
Since $|V(E_0)|=2|E_0|$,  we have 
$$
\tau(G)=|E_0|=\frac 12 |V(M_0)|\ge \frac 12\gamma(G).
$$
\end{proof}

There are graphs $G$ with 
$\iota(\mid(G))= \frac{\gamma(G)}{2}$.
For any graph $H$ with 
vertex set $\{w_1,w_2,\cdots,w_{2k}\}$ and 
a perfect matching $M$, 
if $G$ is obtained from 
$H$ 
by adding $2k$ new vertices $u_1,\dots, u_{2k}$ 
and adding new edges $w_iu_i$
for all $i=1,2,\dots,k$,
then $\iota(\mid(G))= k$ and $\gamma(G)=2k$.

\begin{pro}\label{pro3-0}
For any graph $G=(V,E)$, 
	$\iota(\mid(G))\ge \iota(G)$.
\end{pro}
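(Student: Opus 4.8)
The plan is to reduce the statement to the combinatorial quantity $\tau(G)$ using the results already established, and then exhibit an isolating set of $G$ itself whose size matches $\tau(G)$. By Proposition~\ref{pro2-1} together with Proposition~\ref{pro3-3}, we have $\iota(\mid(G)) = \tau(G) = \nu'(G)$, so it suffices to prove $\tau(G) \ge \iota(G)$. To do this, I would start from a minimum maximal matching $M$ of $G$ with $|M| = \tau(G)$, whose existence is guaranteed by Lemma~\ref{le3-0}.

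From $M$ I build a candidate isolating set $S \subseteq V(G)$ by selecting exactly one endpoint of each edge of $M$, so that $|S| = |M| = \tau(G)$. The key observation is that both endpoints of every matching edge lie in $N_G[S]$: if $e = v_iv_j \in M$ and $v_i$ is the chosen endpoint, then $v_i \in S$ and $v_j \in N_G(v_i)$, whence $\{v_i, v_j\} \subseteq N_G[S]$. Consequently $V(M) \subseteq N_G[S]$, which gives
\[
V(G) \setminus N_G[S] \subseteq V(G) \setminus V(M).
\]
Because $M$ is a maximal matching, $M \in \Theta(G)$, so $V(G) \setminus V(M)$ is independent in $G$; hence its subset $V(G) \setminus N_G[S]$ is independent as well. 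Therefore $S$ is an isolating set of $G$, giving $\iota(G) \le |S| = \tau(G) = \iota(\mid(G))$, as desired.

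The argument is short, and I do not anticipate a serious obstacle. The only point requiring care is the verification $V(M) \subseteq N_G[S]$, which relies on the fact that choosing a single endpoint of each matching edge already captures the other endpoint through adjacency. One should also note that isolated vertices of $G$ (which necessarily lie outside $V(M)$) cause no difficulty: they contribute nothing to edges, and the independence of $V(G) \setminus V(M)$ is precisely what the maximal matching condition supplies.
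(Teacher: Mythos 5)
Your proposal is correct and follows essentially the same route as the paper's own proof: invoke Theorem~\ref{main1} (equivalently, Propositions~\ref{pro2-1} and~\ref{pro3-3}) to obtain a minimum maximal matching $M$ with $|M|=\iota(\mid(G))$, pick one endpoint of each matching edge to form $S$, and observe that $V(M)\subseteq N_G[S]$ so that $V(G)\setminus N_G[S]$ is contained in the independent set $V(G)\setminus V(M)$. No gaps; the argument matches the paper's.
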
 

\begin{proof} 
By Theorem~\ref{main1},
there exists a maximal matching $M_0$ of $G$ with  $|M_0|=\iota(\mid(G))$. 
Let $V_0$ be a subset of $V(M_0)$
such that $|V_0|=|M_0|$ and each edge 
in $M_0$ is incident with one vertex in $V_0$. 
As $V\setminus V(M_0)$ is independent in $G$
and $V(M_0)\subseteq N_G[V_0]$, 
$V\setminus N_G[V_0]$ is independent in $G$,
implying that $V_0$ is isolating in $G$.
Thus, $\iota(G)\le |V_0|=|M_0|=\iota(\mid(G))$.
\end{proof} 

	
	Proposition~\ref{pro3-0} tells that 
	$\iota(G)$ is a lower bound of $\iota(\mid(G))$.
	The next result gives an upper bound 
	of $\iota(\mid(G))$ 
	in terms of $\iota(G)$. 
	
	\begin{pro}\label{pro3-5}
		Let $G$ be a graph with the maximum degree $\Delta$. Then $\iota(\mid(G))\le \Delta\iota(G)$. Moreover, the bound is sharp.
	\end{pro}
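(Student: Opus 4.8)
The plan is to bound $\iota(\mid(G))$ from above using $\iota(G)$ together with the maximum degree $\Delta$. By Theorem~\ref{main1}, it suffices to exhibit a maximal matching $M$ of $G$ with $|M|\le \Delta\iota(G)$, since $\iota(\mid(G))=\nu'(G)\le |M|$. First I would take a minimum isolating set $D$ of $G$, so that $|D|=\iota(G)$ and $V(G)\setminus N_G[D]$ is independent in $G$. The idea is to convert the domination-style covering furnished by $D$ into a matching, paying a factor of at most $\Delta$ per vertex of $D$.

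The key construction is as follows. I would first build a matching $M_1$ that saturates all of $D$ in a controlled way: because $V(G)\setminus N_G[D]$ is independent, the only edges of $G$ with \emph{both} ends outside $N_G[D]$ are absent, so every edge not yet ``handled'' touches $N_G[D]=D\cup N_G(D)$. I would greedily pick, for each vertex of $D$ and its neighborhood, a set of edges covering $N_G[D]$; since each vertex has degree at most $\Delta$, the closed neighborhood $N_G[D]$ can be covered by at most $\Delta|D|$ edges (each $v\in D$ contributes at most $\Delta$ incident edges whose union covers $N_G[v]$). Then I would extend this edge set to a maximal matching $M$ of $G$. The crucial point is that extending to a maximal matching cannot add edges inside the independent set $V(G)\setminus N_G[D]$, so no extra edges are needed there, and the maximal matching obtained still has size at most $\Delta\iota(G)$ after discarding redundancies and reducing to an actual matching.

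The main obstacle I anticipate is the bookkeeping in passing from an edge \emph{cover} of $N_G[D]$ to a genuine \emph{matching}, while simultaneously guaranteeing maximality and keeping the count at $\Delta|D|$. A cover of $N_G[D]$ by $\le \Delta|D|$ edges need not be a matching, so I would instead argue more directly: any maximal matching $M$ satisfies $V(G)\setminus V(M)$ independent, and I want to show some maximal matching uses at most $\Delta|D|$ edges. The cleanest route is to start from $D$, saturate each $v\in D$ by at most one edge and observe that the vertices left unsaturated and undominated form an independent set; then every remaining maximal-matching edge is incident to $N_G[D]$, and since $|N_G[D]|\le (\Delta+1)|D|$ while each added edge consumes at least one vertex of $N_G[D]\setminus V(M_1)$, the total number of edges is bounded by $|D|+\Delta|D|$ — which I would then tighten to $\Delta|D|$ by a more careful accounting of how many neighbors each $v\in D$ can force into the matching.

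Finally, for sharpness, I would exhibit a family attaining equality. A natural candidate is a star-like or $\Delta$-regular gadget where a single isolating vertex dominates a large neighborhood but every maximal matching is forced to use $\Delta$ edges; for instance, taking $G$ to be a suitable disjoint union or subdivision-free construction in which $\iota(G)=1$ yet $\nu'(G)=\Delta$. Verifying that this gadget simultaneously has $\iota(G)$ small and every maximal matching of size exactly $\Delta\iota(G)$ is the last step, and I expect it to reduce to a short direct check via Theorem~\ref{main1} once the gadget is specified.
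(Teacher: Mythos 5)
Your proposal has a genuine gap in both halves. For the upper bound, you commit to producing a \emph{maximal matching} of size at most $\Delta\iota(G)$ and then to invoking $\iota(\mid(G))=\nu'(G)$. Your own accounting, however, only reaches $|D|+\Delta|D|=(\Delta+1)\iota(G)$: you saturate $D$ with at most $|D|$ edges and then bound the remaining matching edges by $|N_G[D]|\le(\Delta+1)|D|$ minus the already-used vertices, and you explicitly defer the reduction from $(\Delta+1)|D|$ to $\Delta|D|$ to ``a more careful accounting'' that is never supplied. That missing step is exactly the content of the proposition, so as written the argument proves only the weaker bound. The difficulty is self-inflicted: the paper does not build a matching at all. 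It uses Proposition~\ref{pro2-1}, $\iota(\mid(G))=\tau(G)$, where $\tau(G)$ is minimized over \emph{arbitrary} edge sets $E_0$ with $V(G)\setminus V(E_0)$ independent. Taking $E_0$ to be the set of all edges incident with a minimum isolating set $S$ gives $V(G)\setminus V(E_0)\subseteq V(G)\setminus N_G[S]$ independent and $|E_0|\le\Delta|S|$ in one line; the conversion to a maximal matching of no larger size is already handled once and for all by Lemma~\ref{le3-0} and need not be redone here. If you insist on working with matchings directly, you are essentially forced to reprove that lemma inside your count, which you have not done.

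The sharpness claim is also incomplete: you describe the shape of a hoped-for gadget (``a star-like or $\Delta$-regular gadget \dots in which $\iota(G)=1$ yet $\nu'(G)=\Delta$'') but never exhibit one or verify its parameters. The paper's example is $G=K_{k,k}$: a single vertex is an isolating set, so $\iota(K_{k,k})=1$, while every maximal matching of $K_{k,k}$ is perfect (it is randomly matchable), so $\nu'(K_{k,k})=k=\Delta$, giving equality. To complete your proof you would need to (a) either switch to the $\tau(G)$ formulation or carry out the tightened matching count, and (b) name and check a concrete extremal family such as $K_{k,k}$.
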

	
	\begin{proof}
		Let $S$ be a minimum isolating set of $G$ and $Q=V(G)\backslash N_G[S]$.
		Let $E_0$ be the set of edges in $G$ 
		each of which is incident with some vertex in $S$. 
		Obviously, 
		$N_G[S]\subseteq V(E_0)$.
		Thus, $V(G)\setminus V(E_0)\subseteq V(G)\setminus N_G[S]$ is independent in $G$.
		It follows that $E_0\in \Theta(G)$.
		
		By the assumption of $E_0$, 
		$|E_0|\le \Delta |S|
		=\Delta \iota(G)$.
		It follows that 
		$$
		\iota(\mid(G))=\tau(G)\le |E_0|\le \Delta \iota(G).
		$$
		
		The sharpness can be seen 
		when $G$ is the complete bipartite graph $K_{k,k}$ for any $k\ge 1$,
		as $\iota(K_{k,k})=1$
		and $\iota(\mid(K_{k,k}))=k=\Delta$. 
	\end{proof}
	

Another lower bound of $\tau(G)$ is in terms of the size of $G$ and the maximum degree.

\begin{pro}\label{pro3-20} 
Let $G$ be a graph with the maximum degree $\Delta$.
Then $\iota(\mid(G))\ge \frac{|E(G)|}{2\Delta-1}.$

\end{pro}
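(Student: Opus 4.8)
The plan is to invoke Theorem~\ref{main1}, which gives $\iota(\mid(G))=\nu'(G)$, so it suffices to prove that \emph{every} maximal matching $M$ of $G$ satisfies $|M|\ge \frac{|E(G)|}{2\Delta-1}$; applying this to a smallest maximal matching then yields the stated bound. First I would fix a maximal matching $M$ and observe that $V(M)$ is a vertex cover of $G$: if some edge of $G$ had both endpoints outside $V(M)$, it could be added to $M$, contradicting maximality. Hence every edge of $G$ has at least one endpoint in $V(M)$, and $|V(M)|=2|M|$.

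Next I would partition $E(G)$ according to how many endpoints each edge has in $V(M)$. Let $E_1$ denote the edges with both endpoints in $V(M)$ and $E_2$ those with exactly one endpoint in $V(M)$; by the vertex-cover property these exhaust $E(G)$, so $|E(G)|=|E_1|+|E_2|$. Counting incidences from the vertices of $V(M)$, each edge of $E_1$ is counted twice and each edge of $E_2$ once, giving $\sum_{v\in V(M)}d_G(v)=2|E_1|+|E_2|$. Since each vertex has degree at most $\Delta$ and $|V(M)|=2|M|$, this sum is at most $2\Delta|M|$, so $2|E_1|+|E_2|\le 2\Delta|M|$.

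The key observation is that the $|M|$ matching edges themselves lie in $E_1$, so $|E_1|\ge |M|$. Combining the estimates,
\[
2\Delta|M|\ \ge\ 2|E_1|+|E_2|\ =\ |E_1|+\bigl(|E_1|+|E_2|\bigr)\ \ge\ |M|+|E(G)|,
\]
which rearranges to $(2\Delta-1)|M|\ge |E(G)|$, that is, $|M|\ge \frac{|E(G)|}{2\Delta-1}$. Since this holds for the minimum maximal matching, $\iota(\mid(G))=\nu'(G)\ge \frac{|E(G)|}{2\Delta-1}$.

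The main obstacle — indeed the only step requiring care — is recognizing that the matching edges contribute to the doubly-covered part $E_1$, giving $|E_1|\ge |M|$. This is precisely what sharpens the naive bound $|E(G)|\le 2\Delta|M|$ (obtained by ignoring double-counting) into the claimed factor $2\Delta-1$.
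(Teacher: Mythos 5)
Your proof is correct and follows essentially the same route as the paper: both use $\iota(\mid(G))=\nu'(G)$, note that $V(M)$ is a vertex cover for a maximal matching $M$, bound $\sum_{v\in V(M)}d(v)\le 2\Delta|M|$, and recover the extra $-|M|$ term from the fact that the matching edges are counted twice in that sum. Your version just makes the double-counting step explicit via the $E_1,E_2$ partition, which the paper compresses into the single inequality $|E(G)|\le\sum_{v\in V(M_0)}d(v)-|M_0|$.
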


\begin{proof}
	By Proposition~\ref{pro2-1},
	there exists a maximal matching $M_0$ of $G$ with  $|M_0|=\nu'(G)$. 
	By definition, $V(G)\setminus V(M_0)$ 
	is independent in $G$,
	implying that 
	every edge in $G$ is incident with some vertex in $V(M_0)$.
	Thus,
	$$
	|E(G)|\le 
	\sum_{v_i\in V(E_0)}d(v_i)	-|M_0|
	\le |V(M_0)|\Delta-|M_0|
	=2|M_0|\Delta-|M_0|
	=|M_0|(2\Delta-1),
	$$
	which implies $|M_0|\ge \frac{|E(G)|}{2\Delta-1}$.
	Since $|E_0|=\nu'(G)$,
	the result follows from Theorem~\ref{main1}. 
\end{proof}

For any maximal independent set $V_0$ of $G$,
there exists a subset $E_0$ of $E(G)$ 
with  $|E_0|\le n-|V_0|$
such that 
every vertex in $V(G)\setminus V_0$ is incident with some edges in $E_0$.
Thus, 
$V(G)\setminus N_G[V(E_0)]$
is independent in $G$ and
$E_0\in \Theta(G)$.
It follows that 
$$
\tau(G)\le |E_0|\le n-|V_0|.
$$
Therefore the following upper 
bound for $\iota(\mid(G))$ is obtained.

 \begin{pro}\label{pro3-4}
	$
	\iota(\mid(G))=\tau(G)\le n-\alpha(G),
	$
where $\alpha(G)$ is the independent number of $G$.
\end{pro}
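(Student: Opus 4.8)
\textbf{Proof proposal for Proposition~\ref{pro3-4}.}

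The plan is to establish the inequality $\tau(G)\le n-\alpha(G)$ by starting from a maximum independent set and explicitly constructing a witness $E_0\in\Theta(G)$ of the required size. Let $V_0$ be a maximum independent set of $G$, so that $|V_0|=\alpha(G)$ and $|V(G)\setminus V_0|=n-\alpha(G)$. The key observation is that because $V_0$ is independent, the set $V(G)\setminus V_0$ is a vertex cover of $G$; in particular every vertex of $V(G)\setminus V_0$ that is not isolated can be ``used up'' by selecting, for each such vertex $w$, one incident edge $e_w\in E(G)$. Collecting these chosen edges into a set $E_0$ gives $|E_0|\le |V(G)\setminus V_0|=n-\alpha(G)$, since we choose at most one edge per vertex of $V(G)\setminus V_0$.

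The next step is to verify that this $E_0$ actually lies in $\Theta(G)$, i.e.\ that $V(G)\setminus V(E_0)$ is independent in $G$. Here I would argue that every vertex of $V(G)\setminus V_0$ is incident with some edge of $E_0$ (by construction, except possibly for isolated vertices, which are harmless since they contribute no edges and sit safely inside any independent set). Consequently $V(G)\setminus V(E_0)\subseteq V_0$, and since $V_0$ is independent, so is any subset of it. This yields $E_0\in\Theta(G)$ and hence
$$
\tau(G)\le |E_0|\le n-\alpha(G).
$$
Combining with Proposition~\ref{pro2-1}, which gives $\iota(\mid(G))=\tau(G)$, completes the argument.

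I expect the only delicate point to be the precise bookkeeping of the edge selection: one must ensure that a single edge $e=wv$ with both endpoints in $V(G)\setminus V_0$ is not double-counted, so that $|E_0|$ is genuinely bounded by $n-\alpha(G)$ rather than merely by the number of covered vertices. The cleanest way to handle this is to orient the selection---process the vertices of $V(G)\setminus V_0$ one at a time and add an incident edge only if the vertex is not yet covered by a previously added edge---so that the number of edges never exceeds the number of vertices being covered. The independence of $V_0$ is what guarantees this covering is possible at all, and it is also what makes the leftover set $V(G)\setminus V(E_0)$ independent, so no separate verification of that property is needed beyond the containment $V(G)\setminus V(E_0)\subseteq V_0$.
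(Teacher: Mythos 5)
Your proposal is correct and follows essentially the same route as the paper: take a maximum independent set $V_0$, select one incident edge for each vertex of $V(G)\setminus V_0$ to form $E_0\in\Theta(G)$ with $|E_0|\le n-\alpha(G)$, and conclude via $\iota(\mid(G))=\tau(G)$. The only remark worth making is that your two worries are non-issues: a maximum independent set necessarily contains every isolated vertex of $G$, and choosing at most one edge per vertex of $V(G)\setminus V_0$ already gives $|E_0|\le n-\alpha(G)$ regardless of whether some edge serves two such vertices.
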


As a special case, we are now going to 
apply the above results to determine 
$\iota(\mid(G))$ 
when $G$ is a path  or a cycle.

\begin{thm}\label{th3-3}
	Let $P_n$ be a path of order $n$, where $n\ge2$. Then
	$\iota (\mid(P_n))=\lfloor\frac{n+1}{3}\rfloor$.
\end{thm}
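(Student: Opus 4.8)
The plan is to reduce the statement to a purely matching-theoretic quantity and then read off the two bounds, one from the general results of Section~\ref{sec3} and one from an explicit construction. Write $P_n = v_1v_2\cdots v_n$ with edges $e_i = v_iv_{i+1}$ for $1\le i\le n-1$. By Theorem~\ref{main1} (equivalently, by Proposition~\ref{pro2-1} together with Proposition~\ref{pro3-3}) we have $\iota(\mid(P_n)) = \nu'(P_n) = \tau(P_n)$, so it suffices to determine the minimum size of a maximal matching of $P_n$, and to show it equals $\lfloor \frac{n+1}{3}\rfloor$.

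For the lower bound I would invoke Proposition~\ref{pro3-20} directly. Since $|E(P_n)| = n-1$ and $\Delta(P_n) = 2$ for $n\ge 3$ (and $\Delta(P_2)=1$, which the same formula handles), that proposition gives $\iota(\mid(P_n)) \ge \frac{n-1}{2\Delta-1} = \frac{n-1}{3}$. As $\iota(\mid(P_n))$ is an integer, this yields $\iota(\mid(P_n)) \ge \lceil \frac{n-1}{3}\rceil$, and I would finish with the elementary identity $\lceil \frac{n-1}{3}\rceil = \lfloor \frac{n+1}{3}\rfloor$ (check the three residue classes of $n-1$ modulo $3$). This already delivers the full matching lower bound, so no separate counting argument is needed.

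For the upper bound I would exhibit a single maximal matching $M$ of $P_n$ with $|M| = \lfloor \frac{n+1}{3}\rfloor$; since $M$ is a maximal matching, $\nu'(P_n)\le |M|$, and then Theorem~\ref{main1} gives $\iota(\mid(P_n))\le |M|$. The natural candidate is the ``domino-and-gap'' pattern $M = \{\,v_{3i-1}v_{3i} : 1\le i\le \lfloor n/3\rfloor\,\}$, augmented by the extra edge $v_{n-1}v_n$ precisely when $n\equiv 2\pmod 3$. I would verify, case by case on $n\bmod 3$, that: (a) $M$ is a matching; (b) the uncovered vertices $V(P_n)\setminus V(M)$ are exactly certain vertices of index $\equiv 1\pmod 3$, hence pairwise non-adjacent, so $V(P_n)\setminus V(M)$ is independent and (by Lemma~\ref{le3-2}, or directly) $M$ is maximal; and (c) $|M| = \lfloor \frac{n+1}{3}\rfloor$. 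Combining the two inequalities gives the asserted equality.

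The only delicate point is the bookkeeping in the construction: one must track, for each path edge $e_i$, which of its endpoints is covered by $M$, so that no augmenting edge exists simultaneously at the left end, in the interior, and at the right end of $P_n$; this is exactly where the three residue cases diverge (the end of the path behaves differently when $n\equiv 2\pmod 3$, forcing the extra edge). I anticipate no conceptual obstacle here, since the lower bound is handed to us by Proposition~\ref{pro3-20}; the entire content of the proof is the explicit matching and the routine verification that it is maximal and of the correct size.
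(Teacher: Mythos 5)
Your proposal is correct and follows essentially the same route as the paper: the lower bound via Proposition~\ref{pro3-20} and the upper bound via the explicit maximal matchings $\{v_{3i-1}v_{3i}\}$ (with the extra edge $v_{n-1}v_n$ when $n\equiv 2\pmod 3$), combined through Theorem~\ref{main1}. Your handling of the rounding step, $\iota(\mid(P_n))\ge\lceil\frac{n-1}{3}\rceil=\lfloor\frac{n+1}{3}\rfloor$, is in fact cleaner than the paper's written inequality $\frac{n-1}{3}\ge\lfloor\frac{n+1}{3}\rfloor$, which is false as stated (e.g.\ for $n=5$) and only becomes valid after invoking integrality exactly as you do.
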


\begin{proof}
By Proposition~\ref{pro3-20}, 
$\iota(\mid(P_n))\ge \frac{n-1}{3}\ge\lfloor \frac{n+1}{3}\rfloor.$
By Proposition~\ref{pro2-1}, 
it remains to show that 
$\tau(P_n)\le\lfloor \frac{n+1}{3}\rfloor.$

Let $v_1v_2\dots v_n$ denote the path $P_n$. 
If $n\equiv 0\pmod{3}$, then 
$
M_1=\{v_{3i-1}v_{3i}: i=1,\dots, \frac{n}{3}\}
$
is a maximal matching of $P_n$.
By Proposition~\ref{pro3-3}, 
$\tau(P_n)=\nu'(P_n)\le |M_1|=\frac{n}{3}
=\lfloor \frac{n+1}{3}\rfloor $.

If $n\equiv 1\pmod{3}$, then
$
M_2=\{v_{3i-1}v_{3i}: i=1,\dots, \frac{n-1}{3}\}
$
is a maximal matching of $P_n$.
By Proposition~\ref{pro3-3}, 
$\tau(P_n)= \nu'(P_n)\le |M_2|=\frac{n-1}{3}=\lfloor \frac{n+1}{3}\rfloor $.

If $n\equiv 2\pmod{3}$, then 
$
M_3=\{v_{3i-1}v_{3i}: i=1,\dots, \frac{n-2}{3}\}\cup \{v_{n-1}v_n\}$
is a maximal matching of $P_n$.
By Proposition~\ref{pro3-3}, 
$\tau(P_n)=\nu'(P_n)\le |M_3|=\frac{n+1}{3}=\lfloor \frac{n+1}{3}\rfloor $.

Thus, the theorem holds.
\end{proof}

\begin{thm}\label{th3-4}
Let $C_n$ be a cycle of order $n$,
where $n\ge3$. Then
$\iota (\mid(C_n))=\lfloor \frac{n+2}{3}\rfloor$.
	\end{thm}

\begin{proof}
By Proposition~\ref{pro3-20}, 
$\iota(\mid(G))\ge \frac{n}{3}$.
As $\iota(\mid(G))$ is an integer, 
$\iota(\mid(G))\ge \lfloor \frac{n+2}{3}\rfloor$.
By Proposition~\ref{pro2-1}, 
it remains to show that 
$\tau (C_n)\le \lfloor \frac{n+2}{3}\rfloor$.
Let $v_1v_2\dots v_nv_1$ denote the cycle $C_n$. 
	
If $n\equiv 0\pmod{3}$, then the set
$M_1=\{v_{3i-1}v_{3i}: i=1,\dots, \frac{n}{3}\}$
is a maximal matching of $C_n$.
    By Proposition~\ref{pro3-3}, 
	$\tau(C_n)\le \nu'(C_n)= |M_1|=\frac{n}{3}
	=\lfloor \frac{n+2}{3}\rfloor$.
	
	If $n\equiv 1\pmod{3}$, then 
	$M_2=\{v_{3i-1}v_{3i}: i=1,\dots, \frac{n-1}{3}\}\cup \{v_nv_1\}$
	is a maximal matching of $C_n$.
	By Proposition~\ref{pro3-3}, 
	$\tau(C_n)\le \nu'(C_n)= |M_2|=\frac{n+2}{3}=\lfloor \frac{n+2}{3}\rfloor$.
	
	If $n\equiv 2\pmod{3}$, then 
	$M_3=\{v_{3i-1}v_{3i}: i=1,\dots, \frac{n-2}{3}\}\cup \{v_{n-1}v_n\}$
	is a maximal matching of $C_n$.
	By Proposition~\ref{pro3-3}, 
	$\tau(C_n)= \nu'(C_n)\le  |M_3|=\frac{n+1}{3}=\lfloor \frac{n+2}{3}\rfloor$.
	
	Since $\tau (C_n)\ge \lfloor \frac{n+2}{3}\rfloor$,
	by the above arguments, 
	the theorem holds.
\end{proof}

\section{Maximum value of $\iota(\mid(G))$
over all graphs $G$ of order $n$
}

Given any connected graph $G$ of order $n$, what is the maximum value of $\iota(\mid(G))$?
In this section, we determine a sharp upper bound of $\iota(\mid(G))$ in terms of $n$.

\begin{Lem}\label{le3-5}
	$\iota(\mid(K_n))=\lfloor \frac{n}{2}\rfloor$,
	and $\iota(\mid(K_{n_1,n_2}))
	=\min\{n_1,n_2\}$.
\end{Lem}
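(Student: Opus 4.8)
The plan is to compute $\iota(\mid(G))$ for these two families directly via Theorem~\ref{main1}, which reduces everything to finding $\nu'(G)$, the minimum size of a maximal matching. So the task becomes: determine the smallest maximal matching of $K_n$ and of $K_{n_1,n_2}$.

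For the complete graph, first I would observe that $K_n$ is equimatchable, so every maximal matching is in fact maximum. The reason is that in $K_n$ any matching that is not maximum leaves at least two unmatched vertices, which are necessarily adjacent (all pairs are edges), so it is not maximal. Hence $\nu'(K_n)=\nu(K_n)=\lfloor n/2\rfloor$, and Theorem~\ref{main1} gives $\iota(\mid(K_n))=\lfloor n/2\rfloor$.

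For $K_{n_1,n_2}$ with partite sets $A$ (size $n_1$) and $B$ (size $n_2$), say $n_1\le n_2$, I would show $\nu'(K_{n_1,n_2})=n_1=\min\{n_1,n_2\}$ in two directions. For the upper bound, match each vertex of $A$ to a distinct vertex of $B$; this uses $n_1$ edges and is maximal because all $n_1$ vertices of $A$ are saturated, so no further edge (which must have an endpoint in $A$) can be added. For the lower bound, I would argue that any maximal matching $M$ must saturate all of $A$: if some $a\in A$ were unsaturated, then since every vertex of $B$ would have to be saturated (else $a$ together with an unsaturated $b\in B$ forms an augmenting edge violating maximality), we get $|M|\ge n_2\ge n_1$; and if $A$ is fully saturated then trivially $|M|\ge n_1$. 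Either way $|M|\ge n_1$, so $\nu'=n_1$, and Theorem~\ref{main1} yields $\iota(\mid(K_{n_1,n_2}))=\min\{n_1,n_2\}$.

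The only point requiring a little care is the lower-bound argument for the bipartite case, where I must handle the two scenarios (whether or not $A$ is saturated) cleanly; everything else is a routine application of the equimatchability observation and the definition of a maximal matching. I do not expect any genuine obstacle here, since Theorem~\ref{main1} has already done the structural work of converting the isolation problem into a pure matching computation.
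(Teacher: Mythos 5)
Your proposal is correct and follows essentially the same route as the paper: both reduce the problem to computing the minimum size of a maximal matching (the paper phrases the lower bounds via edge sets $E_0\in\Theta(G)$ and the counting $|V(E_0)|\le 2|E_0|$, which amounts to the same cardinality argument as your equimatchability observation for $K_n$ and your saturation case analysis for $K_{n_1,n_2}$). No gaps.
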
 

\begin{proof}
	Clearly, $K_n$ has a maximal matching of size $\lfloor \frac{n}{2}\rfloor$. 
	Thus, $\tau(K_n)\le \lfloor \frac{n}{2}\rfloor$
	by Theorem~\ref{main1}.
	
	Let $E_0\in \Theta(K_n)$
	with $|E_0|=\tau(K_n)$. 
	Then, $V(K_n)\setminus V(E_0)$ 
	is an independent set of $K_n$,
	implying that 
	$|V(K_n)\setminus V(E_0)|\le 1$.
	It follows that $|V(E_0)|\ge n-1$. 
	Since $|V(E_0)|\le 2|E_0|$, we have 
	$$
	\tau(K_n)=|E_0|\ge \frac{1}{2} |V(E_0)|\ge \frac{1}{2} (n-1).
	$$
	Since $	\tau(K_n)$ is an integer, 
	$\tau(K_n)\ge \lfloor \frac{n}{2}\rfloor$.
	By Proposition~\ref{pro2-1}, 
	 $\iota(\mid(K_n))\ge \lfloor \frac{n}{2}\rfloor$.
	 Hence 	 $\iota(\mid(K_n))= \lfloor \frac{n}{2}\rfloor$.

	Assume that $n_1\le n_2$
	and $G=K_{n_1,n_2}$
	with a bipartition $(A,B)$, 
	where $|A|=n_1$ and $|B|=n_2$.
	Then, $G$ contains an edge set $E_0$ of size $n_1$
	such that $V(G)\setminus V(E_0)$  
	is independent in $G$,
	implying that $\tau(G)\le n_1$.
	
	Let $E_0$ be any set in $\Theta(G)$
	with $|E_0|=\tau(G)$. 
	Then, $V(G)\setminus V(E_0)$ 
	is an independent set of $G$,
	implying that either 
	$|A\setminus V(E_0)|=0$
	or 	$|B\setminus V(E_0)|=0$.
	It follows that $|E_0|\ge n_1$. 
	Thus, $\tau(G)\ge n_1$.
	
	Hence $\tau(K_{n_1,n_2})= n_1$, by Theorem~\ref{main1} , 
	$\iota(\mid(K_{n_1,n_2}))=n_1$.
\end{proof}

A graph is said to be 
{\it randomly matchable} if every matching of $G$ can be extended to a perfect matching. 
It was shown by Summer \cite{Sum}
that the connected randomly matchable graphs are precisely 
$K_{2k}$ and $K_{k,k}$ for $(k\ge  1)$.
A {\it near-perfect matching} of a graph $G$ 
is a matching in which a single vertex in $G$ is left unmatched. 

\begin{thm}\label{th3-6}
For any connected 
	graph $G$ of order $n$, 
	$
	\iota(\mid(G))\le \lfloor \frac{n}{2}\rfloor,
	$
	where the equality holds \iff 
	\begin{enumerate}
		\item when $n$ is even, 
		either $G\cong K_{n}$ 
	or $G\cong K_{n/2,n/2}$; and 
	    \item when $n$ is odd,  
	    every maximal matching in $G$ 
	is near-perfect. 
		\end{enumerate}
\end{thm}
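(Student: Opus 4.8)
The plan is to reduce everything to the quantity $\nu'(G)$ via Theorem~\ref{main1}, so that the statement $\iota(\mid(G))\le \lfloor n/2\rfloor$ becomes the assertion $\nu'(G)\le \lfloor n/2\rfloor$, and the equality characterization becomes a characterization of when $\nu'(G)=\lfloor n/2\rfloor$. The upper bound itself is immediate: any maximal matching $M$ of $G$ uses $2|M|$ distinct vertices, so $|M|\le \lfloor n/2\rfloor$, and hence $\nu'(G)\le \lfloor n/2\rfloor$. The entire substance of the theorem therefore lies in deciding exactly when a \emph{smallest} maximal matching can still be forced to have size $\lfloor n/2\rfloor$; equivalently, when \emph{every} maximal matching of $G$ is as large as possible.

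First I would split on the parity of $n$. When $n$ is even, $\nu'(G)=n/2$ means every maximal matching is perfect, i.e. every matching of $G$ extends to a perfect matching. This is precisely the definition of a randomly matchable graph, so I would invoke Sumner's theorem (cited as \cite{Sum} in the excerpt) to conclude that a connected such $G$ must be $K_n$ or $K_{n/2,n/2}$. For the converse direction, I would use Lemma~\ref{le3-5}, which already computes $\iota(\mid(K_n))=\lfloor n/2\rfloor$ and $\iota(\mid(K_{n_1,n_2}))=\min\{n_1,n_2\}$; specializing gives $\iota(\mid(K_{n/2,n/2}))=n/2$, confirming equality. When $n$ is odd, $\nu'(G)=\lfloor n/2\rfloor=(n-1)/2$ means every maximal matching leaves exactly one vertex uncovered, which is exactly the stated condition that every maximal matching is near-perfect; here the equivalence is essentially definitional once Theorem~\ref{main1} is in hand, so this case requires mainly careful bookkeeping rather than an external structural theorem.

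The main obstacle is the even case, specifically verifying that $\nu'(G)=n/2$ is genuinely equivalent to $G$ being randomly matchable so that Sumner's classification applies cleanly. The subtlety is that $\nu'(G)=n/2$ asserts the minimum maximal matching has full size, and I must argue this forces \emph{every} matching (not just every maximal one) to extend to a perfect matching: if some matching $M_0$ failed to extend, I would greedily complete $M_0$ to a maximal matching, which would then be non-perfect and hence have size $<n/2$, contradicting $\nu'(G)=n/2$. This short argument bridges the gap between the ``smallest maximal matching'' formulation and the ``randomly matchable'' hypothesis of Sumner's theorem. I would also need to note that connectedness is used precisely to get the clean two-family classification, since Sumner's result is stated for connected graphs.

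Finally I would assemble the two parities into the single biconditional. For the forward direction I combine Theorem~\ref{main1} with the parity analysis and Sumner's theorem; for the backward direction I verify each listed graph attains the bound, citing the explicit values from Lemma~\ref{le3-5} in the even case and observing directly in the odd case that the near-perfect condition gives $\nu'(G)=(n-1)/2=\lfloor n/2\rfloor$. I expect the odd case to be essentially free and the even case to carry all the weight, so the proof should be organized to isolate the even-case reduction to randomly matchable graphs as its technical core.
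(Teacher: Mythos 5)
Your proposal is correct and follows essentially the same route as the paper: reduce to $\nu'(G)$ via Theorem~\ref{main1}, invoke Sumner's classification of connected randomly matchable graphs for the even case (with the same greedy-completion argument bridging ``some matching fails to extend to a perfect matching'' and ``some maximal matching is not perfect''), use Lemma~\ref{le3-5} for the converse, and treat the odd case as essentially definitional. No substantive differences to report.
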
 

\begin{proof}
Clearly, $\nu'(G)\le \frac{n}{2}$
and thus $\nu'(G)\le \Floor{\frac{n}{2}}$. 
Then, by Theorem~\ref{main1}, 
$\tau(G)\le \Floor{\frac{n}{2}}$.

\noindent {\bf Case 1}: $n$ is even,
say $n=2k$. 

If $G$ is neither $K_{2k}$ nor $K_{k,k}$, 
then, 
by a result due to Summer~\cite{Sum}, 
$G$ is not randomly matchable.
It follows that $G$ has a matching $M_0$ such that $M_0$ cannot be extended to a perfect matching of $G$, 
implying that $G$ has a maximal matching $M$ which is not perfect.
Thus, 
$\nu'(G)\le |M|<k$.

If $G$ is either $K_{2k}$ or $K_{k,k}$,
then $\tau(G)=k$ by Lemma~\ref{le3-5}. 
Hence the result holds when $n$ is even.

\noindent {\bf Case 2}: $n$ is odd,
say $n=2k+1$. 

If $G$ has a maximal matching $M$ 
which is not near-perfect, then 
$\nu'(G)\le |M|\le \frac{|V(G)|-2}{2} < k$.
By Theorem~\ref{main1}, 
$\iota(\mid(G))<k$.

Now assume that every maximal matching $M$ of $G$ is near-perfect,
i.e., $|M|=k$.
It follows that $\nu'(G)=k$
by the definition of $\nu'(G)$. 
By Theorem~\ref{main1}, 
$\iota(\mid(G))=k$. 

Thus the result holds when $n$ is odd.
\end{proof} 

\section{Upper bound of $\iota(\mid(T))$ for trees $T$
} 

In this section, we study 
$\iota(\mid(T))$ for a tree $T$. 
Due to Caro and Hansberg \cite{Caroa},  
$\iota (T)\le \frac{n}{3}$ holds 
for any tree $T$ of order $n\ge 3$. 
In this section, we shall 
prove Theorem~\ref{main3} and then 
determine all trees $T$ with 
$\iota(\mid(T))= \lfloor 
\frac{n-1}{2}\rfloor$.

\vspace{0.5 cm}
\noindent {\it Proof} of Theorem~\ref{main3}: 
By Theorem~\ref{th3-6}, 
$\iota (\mid(T))\le \frac{n-1}{2}$
if $n$ is odd;
$\iota (\mid(T))\le \frac{n-2}{2}$
otherwise. 
Thus, 
$\iota (\mid(T))\le \Floor{\frac{n-1}{2}}$.

Now we are going to show that the result is sharp. 
For any $n=2k+i$, where $i=1,2$, 
the tree $T_i$ 
of order $n$ shown in Figure~\ref{f7} 
has the property that 
$\nu'(T_i)=k=\Floor{\frac{n-1}2}$,
implying that 
$\iota(\mid(T_i))=\Floor{\frac{n-1}2}$.
Thus, the result holds.
\proofend 

\begin{figure}[H]
	\centering
	\includegraphics[width=10cm]{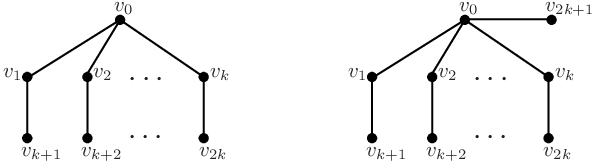}

~~~~~~~(a) $T_1$\hspace{4.2 cm}~~~~~(b) $T_2$ ~~~~
	
\caption{
$\iota(\mid(T_i))=k=
\Floor{\frac{|V(T_i)|-1}{2}}$
for both $i=1,2$, where $k\ge 2$
}

	\label{f7}
\end{figure}

Now we are going to determine
all trees $T$ such that 
$\iota (\mid(T))=\Floor {\frac{|V(T)|-1}{2}}$.

Let $o(G)$ be the number of odd 
components of $G$, 
$d_G(u,v)$ be the distance of  vertices $u$ and $v$ in $G$, 
and 
$Diam(G)$ be the maximum value of 
$d_G(u,v)$ over all pairs of vertices 
$u$ and $v$ in $G$.

 \begin{Lem}\label{le5-0}
 	Let $T$ be a tree of order $n$, where $n\ge 3$, such that  
 	$\iota (\mid(T))=\Floor {\frac{n-1}{2}}$.
 	\begin{enumerate}
 		\item  	\label{no1}
 		 For matching $N$ of $T$, 
 		$o(T-V(N))\le 2$; 
 		
 		\item 	\label{no2}
 		 		For matching $N$ of $T$, 
 		 if $o(T-V(N))\ge 1$, 
 		then $o(T-V(N))= 2$ when $n$ is even, and $o(T-V(N))= 1$ otherwise.
 		Furthermore,  each even component of $T-V(N)$ 
 			is isomorphic to $K_2$;  
 		\item 	\label{no3}
 		if $n$ is odd and 
 		$n\ge 5$, then $d_T(u,v)=4$ for every pair of leaves $u$ and $v$ in $T$; 
 		
 		\item \label{no3-1}
 		if $n$ is even and $n\ge 6$, 
 		then $Diam(T)\ge 4$; 
 		
 		\item 	\label{no4}
 		if $n$ is even, then $Diam(T)\le 7$; and
 		
 		\item 	\label{no5}
 		if $n$ is even and 
 		$Diam(T)\ge 5$, then
 		$d_T(u,v)\ge 4$ for any two 
 		leaves $u$ and $v$ in $T$. 
 	\end{enumerate}
 \end{Lem}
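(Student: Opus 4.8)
The plan is to establish each of the six parts by exploiting the key equivalence $\iota(\mid(T))=\tau(T)=\nu'(T)$ from Theorem~\ref{main1}, combined with the hypothesis $\iota(\mid(T))=\Floor{\frac{n-1}{2}}$. The crucial reformulation I would use throughout is this: for any matching $N$ of $T$, we can greedily extend $N$ to a maximal matching $M$ of $T$, and the unmatched vertices of $M$ form an independent set of size $n-2|M|$. Since $|M|\ge \nu'(T)=\Floor{\frac{n-1}{2}}$ by definition of $\nu'$, the number of $M$-unmatched vertices is at most $n-2\Floor{\frac{n-1}{2}}$, which equals $1$ when $n$ is odd and $2$ when $n$ is even. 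Thus any maximal matching leaves at most one (resp.\ two) vertices exposed. This tight control on exposed vertices is the engine behind every part.

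For part~(i), given a matching $N$ of $T$, I would observe that each odd component of $T-V(N)$, being a tree of odd order, has a near-perfect matching leaving exactly one vertex exposed; each even component has a perfect matching. Combining $N$ with perfect/near-perfect matchings of all components of $T-V(N)$ yields a maximal matching $M$ of $T$ whose exposed vertices are exactly one per odd component. Hence $o(T-V(N))$ equals the number of $M$-exposed vertices, which is at most $2$ by the counting above; this gives~(i). Part~(ii) refines this: if $o(T-V(N))\ge 1$, parity of $n$ forces the exact count (the number of exposed vertices must have the same parity as $n$, since $|V(N)|$ is even), giving $o=2$ for even $n$ and $o=1$ for odd $n$. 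For the "furthermore" claim, if some even component $C$ of $T-V(N)$ were not isomorphic to $K_2$, then $C$ would be a tree on at least $4$ vertices, which necessarily admits a \emph{maximal} matching that is not perfect (e.g.\ a path $P_4$ has the maximal matching consisting of its middle edge, leaving two exposed); substituting this into the construction would push the total exposed count above the allowed bound, a contradiction.

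For parts~(iii)--(vi), which are diameter and distance constraints, I would argue by contradiction using the same exposed-vertex budget. For~(iii), suppose two leaves $u,v$ satisfy $d_T(u,v)\ne 4$ for $n$ odd. If the distance is too small or too large, I would construct a matching (by matching each leaf to its support vertex, then extending) whose associated maximal matching leaves more than one vertex exposed or has an odd component structure violating~(ii), contradicting the hypothesis. The distance value $4$ arises precisely because a path of length $4$ between two leaves admits a maximal matching (two edges) leaving exactly the central vertex exposed, which is the unique admissible configuration. Parts~(iv)--(vi) for even $n$ follow analogous reasoning: a diameter that is too large would allow a long induced path along which one can build a maximal matching exposing too many vertices, while~(v) and~(vi) pin down the local leaf-distance structure forced by having exactly two exposed vertices. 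In each case the technique is to take a witnessing geodesic or a leaf pair, match along it in the way that maximizes exposed vertices, extend to a maximal matching, and compare against the budget.

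The main obstacle I anticipate is \emph{case analysis discipline} rather than any single deep idea. The counting principle is clean, but translating "$d_T(u,v)\ne 4$" or "$Diam(T)\ge 8$" into an explicit matching that provably exposes too many vertices requires careful handling of how the geodesic interacts with the rest of the tree---in particular, ensuring the extension to a maximal matching does not accidentally re-cover the vertices one wants exposed. The delicate step is showing that along a sufficiently long path one can always \emph{choose} a maximal matching realizing the worst-case exposure; this uses the tree structure (no cycles, so subtrees hang off the path cleanly) but demands verifying that the exposed vertices along the path remain exposed in the global maximal matching. I would isolate this as a small auxiliary observation (a maximal matching of $T$ restricted to a path can be taken to expose every third vertex) and invoke it uniformly across parts~(iii)--(vi).
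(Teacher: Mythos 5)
Your overall strategy is the paper's: everything is driven by $\nu'(T)=\Floor{\frac{n-1}{2}}$ together with the observation that a matching $N$ forces at least one exposed vertex per odd component of $T-V(N)$, so $o(T-V(N))\le n-2\Floor{\frac{n-1}{2}}\le 2$, with parity pinning down the exact value. Parts (i) and (ii) are essentially right, but one intermediate claim is false: a tree of odd (resp.\ even) order need not have a near-perfect (resp.\ perfect) matching --- $K_{1,4}$ and $K_{1,3}$ are counterexamples --- so the maximal matching $M$ with ``exactly one exposed vertex per odd component'' that you construct does not exist in general. This is harmless only because the argument needs the inequality $o(T-V(N))\le\#\{\text{$M$-exposed vertices}\}$, which any maximal matching $M\supseteq N$ supplies; you should state it that way (as the paper does, via $|N_0|\le\frac{n-|V(N)|-o(T-V(N))}{2}$ for a maximal matching $N_0$ of $T-V(N)$) rather than via perfect or near-perfect matchings of the components.

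The genuine gap is in parts (iii)--(vi), where you describe a strategy but do not produce the matchings, and where the one concrete construction you name is backwards: matching each leaf to its support vertex \emph{covers} the leaf, whereas the whole point is to leave both leaves exposed by matching each support vertex away from its leaf (for a leaf-to-leaf path $v_0v_1\cdots v_k$ take $N=\{v_1v_2,v_{k-2}v_{k-1}\}$ when $k\ge 5$, $N=\{v_1v_2\}$ when $k=3$, and $N=\{v_1w\}$ for a third neighbour $w$ of $v_1$ when $k=2$; the case $k=4$ is exactly the one that cannot be broken this way). Note also that you never need to extend $N$ to a maximal matching of $T$ and track what the extension does: parts (i) and (ii) are stated for arbitrary matchings $N$, so applying them directly to these one- or two-edge matchings already yields the contradiction, and your proposed ``every third vertex'' auxiliary lemma is solving a problem that does not arise. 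Finally, two of the statements do not fit your template at all: part (iv) ($Diam(T)\ge 4$) is a \emph{lower} bound on the diameter, proved by showing that stars and diameter-$3$ trees violate (ii), and part (vi) requires a two-stage argument (first use (ii) to force all remaining components of $T-V(N)$ to be copies of $K_2$, then use $Diam(T)\ge 5$ to locate a second matching $N'$ isolating four vertices). In this lemma the case analysis \emph{is} the proof, so deferring it leaves the main content unestablished.
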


\begin{proof} 
By Theorem~\ref{main1}, 
$\iota (\mid(T))=\Floor {\frac{n-1}{2}}$
implies that $\nu'(T)=\Floor {\frac{n-1}{2}}$.
Let $N$ be any matching of $T$.

	\ref{no1}.
For any maximal matching $N_0$ of $T-V(N)$, 
we have 
$$
|N_0|\le \frac{|V(T)|-|V(N)|-o(T-V(N))}{2}.
$$
Observe that if $N_0$ is a maximal matching $N_0$ of $T-V(N)$, 
then $N_0\cup N$ is a maximal matching of $T$. Thus,
$$
\nu'(T)\le |N|+
\frac{|V(T)|-|V(N)|-o(T-V(N))}{2}
=\frac{n-o(T-V(N))}{2}.
$$
Since $\nu'(T)=\Floor {\frac{n-1}{2}}$,
the result follows.

\ref{no2}.
Assume that $i=o(T-V(N))\ge 1$.
Since $|V(N)|$ is even, 
$i$ is even \iff $n$ is even.
Thus,  $i\ge 2$ if $n$ is even
and $i\ge 1$ otherwise. 
But, by the result in \ref{no1}, 
$i= 2$ if $n$ is even
and $i=1$ otherwise. 

Suppose some even component 
$T_0$ of $T-V(N)$ with $T_0\not\cong K_2$.
Then $T_0$ has a matching $N_0$ such that $T_0-V(N_0)$ 
has an isolated vertex.
It follows that 
$N\cup N_0$ is a matching of $T$ such that 
$o(T-V(N\cup N_0))\ge 3$, 
a contradiction to \ref{no1}.
Hence \ref{no2} holds.

\ref{no3}.  
Let $v_0v_1\cdots v_k$ 
be any path in $T$, where $v_0$ and $v_k$ are leaves in $T$.
In order to prove that $k=4$, 
it suffices to show that when $k\ne 4$, 
$T$ has a matching $N$ 
such that both $v_0$ and $v_k$ are isolated 
vertices in $T-V(N)$,
implying that $o(T-V(N))\ge 2$.
However, as $n$ is odd, 
$o(T-V(N))$ is odd and 
thus $o(T-V(N))\ge 3$, 
a contradiction to 
the result of \ref{no1}. 

If $k\ge 5$, then 
$N=\{v_1v_2, v_{k-2}v_{k-1}\}$
is a matching of $T$ such that 
$v_0$ and $v_k$ are isolated 
vertices in $T-V(N)$.
If $k=3$, then $N=\{v_1v_2\}$ is  
a matching of $T$ such that 
$v_0$ and $v_3$ are isolated 
vertices in $T-V(N)$.
If $k=2$, then $N=\{v_1w\}$ is  
a matching of $T$ such that 
$v_0$ and $v_2$ are isolated 
vertices in $T-V(N)$, 
where $w$ is a vertex in $N_T(v_1)\setminus \{v_0,v_2\}$.
Since $n\ge 5$, such a vertex $w$ in 
$N_T(v_1)\setminus \{v_0,v_2\}$ exists. 

Thus \ref{no3} holds.

\ref{no3-1}. Clearly, $Diam(T)\ne 2$.
Otherwise, $T$ is a star and 
$\nu'(T)=1<\Floor{\frac {n-1}2}$,
a contradiction. 

Now suppose that $Diam(T)=3$.
Let $P:=u_0u_1u_2u_3$ be a path in $T$,
where both $u_0$ and $u_3$ are leaves in $T$.
For the matching $N=\{u_1u_2\}$, 
both $u_0$ and $u_3$ are isolated vertices in $T-V(N)$.
Let $T_1, T_2, \cdots, T_s$
be the components of  $T-V(P)$. 
By the result in \ref{no2}, $T_i$
is isomorphic to $K_2$
for each $i=1,2,\dots,s$.
Since $n\ge 6$, we have $s\ge 1$.
Clearly, $T$ can be obtained from 
path $P$ and subtrees $T_1, T_2,\dots, T_s$ by adding edges $e_1,e_2,\dots, e_s$, where each $e_i$ 
joins a vertex in $\{u_1,u_2\}$ 
to a vertex in $V(T_i)$. 
It is clear that $Diam(T)\ge 4$, 
a contradiction to the assumption
that $Diam(T)=3$.

Thus, \ref{no3-1} holds. 

\ref{no4}. Let $n$ be even and 
$u_0u_1\cdots u_t$ be a longest path in $T$.
Then both 
$u_0$ and $u_t$ are leaves in $T$.
Suppose that  $t=Diam(T)\ge 8$.
Observe that $N=\{u_1u_2,u_{k-2}u_{k-1}\}$ 
is a matching of $T$ 
such that both $u_0$ and $u_k$  
are isolated vertices in $T-V(N)$,
implying that $o(T-V(N))\ge 2$.
By \ref{no2}, each even component 
of $T-V(N)$ is isomorphic to $K_2$.
However, the component of $T-V(N)$
containing vertices $u_3,u_4,\dots, 
u_{k-3}$ has at least $k-5\ge 3$ vertices,
a contradiction.
 
Hence \ref{no4} holds.

\ref{no5}. Assume that $n$ is even 
and $Diam(T)\ge 5$.

We first show that $T$ has  no 
path $u_0u_1u_2u_3$ of length $3$
connecting two leaves of $T$.
Otherwise, $N=\{u_1u_2\}$ 
is a matching of $T$ such that 
both $u_0$ and $u_3$ are isolated vertices of $T-V(N)$,
forming two odd components
$T_1$ and $T_2$.
Let $T_3,T_4,\dots, T_r$ 
be the other components of $T-V(N)$.
By \ref{no1} and \ref{no2},  $T_i\cong K_2$
for each $i=3,4,\dots,r$.
Then, $T$ is a tree obtained 
from the path $u_0u_1u_2u_3$ 
and the 
subtrees $T_3,T_4,\cdots, T_r$ 
by adding one edge $e_i$ joining 
one vertex  in $\{u_1,u_2\}$ 
to one vertex in $V(T_i)=\{x_i,y_i\}$,
say $x_i$, 
for each $i\in \{3,4,\dots,r\}$.
Since $Diam(T)\ge 5$, 
there must be two edges in $\{e_i: 3\le i\le r\}$, 
say $e_3$ and $e_4$, 
such that $e_3=u_1x_3$
and $e_4=u_2x_4$, as shown in
Figure~\ref{f8}. 
However,  $N'=\{e_3,e_4\}$ must be a matching 
of $T$ such that $u_0,u_3, y_3$ and $y_4$ are isolated vertices in $T-V(N')$,
a contradiction to \ref{no1}.

\begin{figure}[H]
	\centering
	\includegraphics[width=3.5cm]{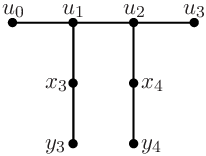}
	
	
	\caption{A subtree of $T$
	}
	
	\label{f8}
\end{figure}

Now we are going to 
show that $T$ has  no 
path $u_0u_1u_2$ of length $2$
connecting two leaves of $T$.
Otherwise, $u_1$ is adjacent to 
some vertex $w\in V(T)\setminus \{u_0,u_1,u_2\}$ as $n\ge \Delta(T)+1\ge 6$.
Note that $N=\{u_1w\}$ is a matching 
of $T$ such that 
both $u_0$ and $u_2$ are 
isolated vertices 
in $T-V(N)$.
Let $T_3,\cdots,T_r$ be the components 
of $T-V(N)$ such that 
$T_i\cap \{u_0,u_2\}=\emptyset$
for each $i=3,4,\dots,r$.
By (i) and (ii), $T_i\cong K_2$
for each $i\in \{3,4,\dots,r\}$,
and $T$ is obtained from 
the subtree $T_0$, 
induced by $\{w\}\cup 
\{u_i: i=0,1,2\}$,
and subtrees $T_3,\cdots,T_r$
by adding an edge $e_i$ 
joining a vertex in $\{u_1,w\}$ 
to a vertex in $V(T_i)=\{x_i,y_i\}$,
say $x_i$, 
for each $i\in \{3,4,\dots,r\}$.

By the conclusion in the previous paragraph, 
$d_T(u,v)\ne 3$ for any two leaves 
$u$ and $v$ in $T$, 
 implying that $e_i=wx_i$
for each $i\in \{3,4,\dots,r\}$.
It follows that $Diam(T)\le 4$,
a contradiction
to the given condition that 
$Diam(T)\ge 5$. 

Hence \ref{no5} holds.
\end{proof} 

In the following, 
we are going to determining all 
trees 
$T$ with 
$\iota (\mid(T))=\Floor {\frac{|V(T)|-1}{2}}$.
Obviously, $|V(T)|\ge 3$ for such trees.

\begin{thm}\label{th3-8}
For any tree $T$ of order $n$, 
where $n\ge 3$,   if 
$\iota (\mid(T))=\Floor {\frac{n-1}{2}}$,
then $T$ is a tree described below:
\begin{enumerate}
	\item when $3\le n\le 4$, 
	$T$ is $P_n$ or $K_{1,3}$;

	\item when $n\ge 5$
	and $n$ is odd, $T$ is the 
	tree $T_1$ shown in Figure~\ref{f7} (a) for some $k\ge 2$; 
	
	\item when $n\ge 6$,
	$n$ is even and 
	$Diam(T)\le 4$, 
	$T$ has a leaf $w$ such that 
	$T-w$ is isomorphic to the tree  
	$T_1$ shown in Figure~\ref{f7} (a);
	and 
	
	\item when $n\ge 6$, 
	$n$ is even and 
	$Diam(T)\ge 5$,
	$T$ is a tree shown in Figure~\ref{f5-3}.
\end{enumerate} 
\end{thm}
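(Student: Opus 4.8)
The plan is to characterize the extremal trees by combining the structural constraints already extracted in Lemma~5.1 with the translation $\iota(\mid(T))=\nu'(T)$ provided by Theorem~1.1. By Theorem~1.1, the hypothesis $\iota(\mid(T))=\Floor{\frac{n-1}{2}}$ is equivalent to $\nu'(T)=\Floor{\frac{n-1}{2}}$, so throughout I would work directly with the smallest maximal matching. The argument naturally splits along the same case division as the statement, driven by the parity of $n$ and by $Diam(T)$, and I would dispose of the small cases $3\le n\le 4$ by direct inspection (only $P_3,P_4$ and $K_{1,3}$ survive, since $\nu'(P_3)=\nu'(K_{1,3})=1$ and $\nu'(P_4)=1$, matching $\Floor{\frac{n-1}{2}}$).

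For the odd case $n\ge 5$, I would invoke Lemma~5.1\ref{no3}, which forces $d_T(u,v)=4$ for every pair of leaves $u,v$. This is the decisive constraint: if all leaves are pairwise at distance exactly $4$, then $T$ has a unique center vertex $c$, every leaf lies at distance $2$ from $c$, and each neighbor of $c$ is a support vertex carrying its own leaves. I would argue that this forces $T$ to be precisely the tree $T_1$ of Figure~3(a) (a ``spider'' of legs of length $2$), reading off $k$ from $n=2k+1$. The verification that this $T_1$ indeed has $\nu'(T_1)=k$ is already recorded in the proof of Theorem~1.3, so I would cite it rather than recompute.

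The even case is the main obstacle, since it fragments into $Diam(T)\le 4$ and $Diam(T)\ge 5$, with Lemma~5.1 giving $4\le Diam(T)\le 7$ for $n\ge 6$ (parts \ref{no3-1} and \ref{no4}). For $Diam(T)\le 4$, so necessarily $Diam(T)=4$, I would show that $T$ differs from an odd extremal tree by a single pendant vertex: by Lemma~5.1\ref{no2}, adjoining the inevitable extra odd component to the diameter-$4$ structure produces exactly a tree $T$ with a leaf $w$ such that $T-w\cong T_1$, and I would check that deleting $w$ drops $n$ by one into the odd extremal family while preserving $\nu'$. For $Diam(T)\ge 5$, Lemma~5.1\ref{no5} again forces all leaf-pairs to distance $\ge 4$, and combined with the upper bound $Diam(T)\le 7$ this severely restricts the possible shapes; the hard part will be showing that only the trees depicted in Figure~4 remain. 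Here I would take a longest path $u_0u_1\cdots u_t$ with $5\le t\le 7$, use Lemma~5.1\ref{no2} to force every component of $T-V(N)$ (for a suitable matching $N$ along the path) to be a $K_2$, and then argue that the no-length-$3$-leaf-path condition from \ref{no5} pins down exactly how these $K_2$'s may attach to the spine. I expect this final subcase to require the most careful bookkeeping, since one must simultaneously respect the diameter bound, the distance-$4$ leaf condition, and the $o(T-V(N))\le 2$ restriction; the payoff is that these three constraints together leave no freedom beyond the listed families.
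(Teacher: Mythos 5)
Your proposal follows essentially the same route as the paper: translate via Theorem~\ref{main1} to $\nu'(T)=\Floor{\frac{n-1}{2}}$, dispose of $3\le n\le 4$ by inspection, use Lemma~\ref{le5-0}\ref{no3} to force the distance-$4$ spider $T_1$ in the odd case, and in the even case split on $Diam(T)=4$ versus $5\le Diam(T)\le 7$, using the matching along a longest path together with Lemma~\ref{le5-0}\ref{no1}, \ref{no2} and \ref{no5} to force all off-path components to be $K_2$'s attached only at the permitted spine vertices. The plan is sound and matches the paper's proof, including the case analysis on $t\in\{5,6,7\}$ that yields the three trees of Figure~\ref{f5-3}.
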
 

\begin{figure}[!ht]
	\centering
	\includegraphics[width=15cm]
	{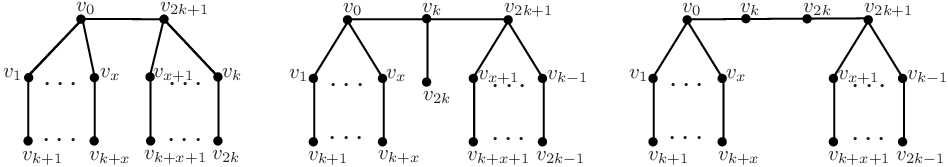}
	
	(a)\hspace{4 cm}(b)\hspace{4.5 cm}(c)~~~~
	
	\caption{Trees $T$ with 
		$\iota(\mid(T))=\frac{|V(T)|-2}{2}$}
	\label{f5-3}
\end{figure}

\begin{proof}
	(i). It can be verified directly. 
	
	(ii). Assume that $n\ge 5$ and 
	$n$ is odd. 
	If $n=5$, then $T\cong P_5$
	by Lemma~\ref{le5-0} \ref{no3}, 
	and the result holds.
	In the following, assume that $n\ge 7$. 
	
	By Lemma~\ref{le5-0} \ref{no3} again, 
	$d_T(u,v)=4$ for each pair of leaves 
	$u$ and $v$ in $T$, implying that
	$Diam(T)=4$.
	Let $u_1u_2u_3u_4u_5$ be a longest path in $T$, where 
 both $u_1$ and $u_5$ are leaves in $T$.
	By Lemma~\ref{le5-0} \ref{no3} again,
	we have  
	$N_T(u_2)=\{u_1,u_3\}$ and
	$N_T(u_4)=\{u_3,u_5\}$. 
	Thus, $d_T(u_3)\ge 3$. 
	
	Assume that  $N_T(u_3)\setminus \{u_2,u_4\}=\{v_1,v_2,\dots,v_r\}$,
	where $r=d_T(u_3)-2$.
	By Lemma~\ref{le5-0} \ref{no3} again, 
	for each $i\in [r]$, $d_T(v_i)\ge 2$,
	and $Diam(T)=4$ implies that  
	each vertex in 
	$N_T(v_i)\setminus \{u_3\}$ is a leaf 
	of $T$. 
	By Lemma~\ref{le5-0} (iii) \ref{no3}, 
	for each $i\in [r]$, $d_T(v_i)=2$.
	It follows that $T$ is isomorphic 
	to $T_1$ shown in Figure~\ref{f7} (a),
	where 
	$k=\frac{n-1}2$.
	
	Hence (ii) holds.
	
	(iii). Assume that $n\ge 6$, $n$ is even and $Diam(T)\le 4$.
	By Lemma~\ref{le3-0} \ref{no3-1}, 
	$Diam(T)=4$.
	
	We first prove the following claims.
	
\noindent {\bf Claim 1}: 
For any longest 
path $u_1u_2u_3u_4u_5$ in $T$,
all vertices in $(N_T(u_2)\cup N_T(u_4))
\setminus \{u_3\}$ are leaves of $T$.

The claim  follows directly 
from the condition that $Diam(T)=4$.	
	
\noindent {\bf Claim 2}: 
For any longest 
path $u_1u_2u_3u_4u_5$ in $T$,
$(N_T(u_2)\cup N_T(u_3)\cup N_T(u_4))\setminus \{u_1,u_5\}$ contains at most $1$ leaf in 
$T$.

If $T$ has at least two leaves 
contained in $(N_T(u_2)\cup N_T(u_3)\cup N_T(u_4))\setminus \{u_1,u_5\}$, 
then it can be verified that 
$T$ has a matching $N$ such that 
$T-V(N)$ contains at least three isolated vertices, contradicting 
Lemma~\ref{le5-0} \ref{no1}. 
	
Hence Claim 2 holds.
	
\noindent {\bf Claim 3}: 
$T$ has a longest 
path $u_1u_2u_3u_4u_5$ in $T$
such that 
$(N_T(u_2)\cup N_T(u_3)\cup N_T(u_4))\setminus \{u_1,u_5\}$ contains exactly  $1$ leaf of $T$.

Assume that Claim 3 fails. 
Let  $P:u_1u_2u_3u_4u_5$ be a longest path in $T$. 
Then,  $d_T(u_2)=d_T(u_4)=2$.
Since $n=|V(T)|$ is even, 
$T-V(P)$ has an odd component 
$T_0$. 
Thus, $T$ has an edge $e_0$ joining 
$u_3$ to some vertex $u_4'$ in $T_0$.
Since $Diam(T)=4$, $T_0$ must be a star with $u_4'$ as its center.
As Claim 3 fails, 
$|V(T_0)|\ge 3$. 
But, then there is a path of length $4$:
$u_1u_2u_3u_4'u_5'$, where 
$u_5'\in N_{T_0}(u'_4)$,
such that each vertex $v$ in
$V(T_0)\setminus \{u'_4,u'_5\}$ 
is a leaf of $T$ and $v\in N_T(u'_4)$,
implying that Claim 3 holds, a contradiction. 

Thus, Claim 3 holds.

\noindent {\bf Claim 4}: 
$T$ contains a leaf $w$ such that 
$T-w$ is isomorphic to $T_1$,
where $T_1$ is the graph 
in Figure~\ref{f7} (a) for some $k\ge 2$.

By Claim 3, 
$T$ has a longest 
path $u_1u_2u_3u_4u_5$ in $T$
such that 
$(N_T(u_2)\cup N_T(u_3)\cup N_T(u_4))\setminus \{u_1,u_5\}$ contains exactly  $1$ leaf of $T$,
say $w$.
We may assume that 
$w\in N_T(u_2)\cup N_T(u_3)$. 

Then, $N=\{u_2u_3\}$ is a matching 
of $T$ such that both $w$ and $u_1$ 
are isolated vertices of $T-V(N)$.
By Lemma~\ref{le5-0} (ii), 
for each component $T_i$ of 
$T-(V(N)\cup \{u_1,w\})$, 
$T_i$ is isomorphic to $K_2$
and 
$|N_T(u_3)\cap V(T_i)|=1$
for each component $T_i$ of 
$T-(V(N)\cup \{u_1,w\})$.
Therefore, $T-w$ is isomorphic to 
$T_1$,  
the tree in Figure~\ref{f7} 
(a) for some $k\ge 2$.

Hence (iii) holds.

(iv).  
Assume that $n\ge 6$, $n$ is even 
and $Diam(T)\ge 5$.
Let $u_0u_1u_2\cdots u_t$ be a longest path in $T$.
Thus, $t=Diam(T)$.
By Lemma~\ref{le5-0} \ref{no4}, 
$5\le t\le 7$.
Clearly, both $u_0$ and $u_t$ 
are leaves of $T$.
Note that 
$N=\{u_1u_2,u_{t-2}u_{t-1}\}$
is a matching of $T$ such that 
both $u_0$ and $u_t$ are isolated vertices 
in $T-V(N)$.
Let $T_3,T_4,\dots,T_r$ be the components of $T-(V(N)\cup \{u_0,u_t\})$.
By \ref{no1} and \ref{no2} in Lemma~\ref{le5-0}, $T_i\cong K_2$
for each $i\in \{3,4,\dots,r\}$.

\noindent {\bf Case 1}: $t=5$. 

In this case, 
$T$  is a tree obtained from the path 
$u_0u_1u_2\dots u_5$ and 
subtrees $T_3,\cdots,T_r$
by adding an edge $e_i$ 
joining a vertex in $V(T_i)=\{x_i,y_i\}$,
say $x_i$, to 
a vertex in $\{u_1,u_2,u_3,u_4\}$ 
for each $i\in \{3,4,\dots,r\}$.
By Lemma~\ref{le5-0} \ref{no5}, 
$d_T(v_1,v_2)\ge 4$
for any two leaves $v_1$ and $v_2$ in $T$.
Thus, for each $i\in \{3,4,\dots,r\}$,
$e_i$ must join $x_i$ to a vertex in $\{u_2,u_3\}$.
In this case, 
$T$ is the graph in Figure~\ref{f5-3} (a)
for some $k\ge 2$. 

\noindent {\bf Case 2}: $t=6$. 

In this case, some subtree $T_i$,
say $T_3$, contains vertex $u_3$.
Since $T_3\cong K_2$,  $u_3$ must be  adjacent to some leaf of $T$, say $v$,
as shown in Figure~\ref{f9} (a).

\begin{figure}[H]
	\centering
	\includegraphics[width=15cm]{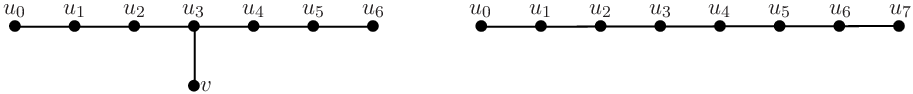}
	
	~~~(a) $t=6$\hspace{5.2 cm} (b) $t=7$ ~~~~
	
	\caption{Subtrees of $T$}
	
	\label{f9}
\end{figure}

Then,  $T$ is a tree obtained from 
the subtree $T_0$, 
induced by 
$\{u_i:0\le i\le 6\}\cup \{v\}$ 
(see Figure~\ref{f9} (a)), 
and 
subtrees $T_4,\cdots,T_r$
by adding an edge $e_i$ 
joining a vertex in $V(T_i)=\{x_i,y_i\}$,
say $x_i$, 
to 
a vertex in $\{u_1,u_2,u_4,u_5\}$ 
for each $i\in \{4,5,\dots,r\}$.
By Lemma~\ref{le5-0} \ref{no5}, 
$d_T(v_1,v_2)\ge 4$
for any two leaves $v_1$ and $v_2$ in $T$.
Thus, for each $i\in \{4,5,\dots,r\}$,
$e_i$ must join $x_i$ to a vertex in $\{u_2,u_4\}$.
In this case,
$T$ is the graph in Figure~\ref{f5-3} (b)
for some $k\ge 2$. 

\noindent {\bf Case 3}: $t=7$. 

In this case, some subtree $T_i$,
say $T_3$,  has its vertex set 
$\{u_3, u_4\}$, as shown in Figure~\ref{f9} (b).
Then, $T$  is a tree obtained from the path 
$u_0u_1u_2\dots u_7$ and 
subtrees $T_4,\cdots,T_r$
by adding an edge $e_i$ 
joining a vertex in $V(T_i)=\{x_i,y_i\}$,
say $x_i$, 
to 
a vertex in $\{u_1,u_2,u_5,u_6\}$ 
for each $i\in \{4,5,\dots,r\}$.
By Lemma~\ref{le5-0} \ref{no5}, 
$d_T(v_1,v_2)\ge 4$
for any two leaves $v_1$ and $v_2$ in $T$.
Thus, for each $i\in \{4,,5\dots,r\}$,
$e_i$ must join $x_i$ to a vertex in $\{u_2,u_5\}$.
In this case, 
$T$ is the graph in Figure~\ref{f5-3} (c)
for some $k\ge 2$. 

Hence (iv) holds.
\end{proof} 

\section{Conclusion} 

This paper mainly investigates the partial domination problem with no restriction on the properties $P$ or $Q$ but with $G[V\backslash N_G[S]]$ having the property that $G[V\backslash N_G[S]]$ is an independent set of middle graphs. There are many problems about this issue that deserve further to be studied. We propose the following problems. 

\begin{prob}\label{prob1}
What other transformation graphs in graph theory can be studied for their partial domination? What is the corresponding bound or value?
\end{prob}

\begin{prob}\label{prob2}
What is the relationship between the $\mathcal{F}$-isolation number of different transformation graphs and the $\mathcal{F}$-isolation number of the original graph? Or equality or inequality with other graph-theoretic parameters of the original graph?
\end{prob} 

\vspace{3mm}
\noindent \textbf{Declaration of competing interest}

The authors have no relevant financial or non-financial interests to disclose.

\noindent \textbf{Data availability}

No data was used for the research described in the article.

\noindent \textbf{Acknowledgements}

This work was supported by the National Science Foundation of China (Nos.12261074, 12461065 and 12371340).


\end{document}